\newcommand{\setN}{\ensuremath{\mathbb{N}}}
\newcommand{\setR}{\ensuremath{\mathbb{R}}}
\newcommand{\setZ}{\ensuremath{\mathbb{Z}}}
\newcommand{\Dcal}{\ensuremath{\mathcal{D}}}
\newcommand{\Pcal}{\ensuremath{\mathcal{P}}}
\newcommand{\set}[2][]{#1\{ {#2} #1\}}
\newcommand{\card}[2][]{#1| #2 #1|}
\newcommand{\paren}[2][]{#1( #2 #1)}
\newcommand{\ceil}[2][]{#1\lceil #2 #1\rceil}
\newcommand{\st}{\colon}
\DeclareMathOperator{\funcaoprob}{\mathbb{P}}
\newcommand{\prob}[2][]{\mathbb{P}#1(#2#1)}
\DeclareMathOperator{\probmultisub}{multi}
\newcommand{\probmultiop}{\funcaoprob_{\probmultisub}}
\newcommand{\probsimpleop}{\funcaoprob}
\newcommand{\probmulti}[2][]{\probmultiop#1(#2#1)}
\newcommand{\probsimple}[2][]{\probsimpleop#1(#2#1)}
\newcommand{\probcond}[3][]{\mathbb{P}#1(#2\,#1|\,#3#1)}
\newcommand{\esp}[2][]{\mathbb{E}\,#1(#2#1)}
\newcommand{\var}[2][]{\mathop{\rm Var}#1(#2#1)}
\newcommand{\tpoisson}[3][]{\mathop{\rm Po}#1(#2, #3 #1)}
\newcommand{\multinomial}[4][]{\mathop{\rm Multi}_{\geq #2} #1(#3, #4 #1)}
\newcommand{\multinomialknm}[1][]{\multinomial[#1]{k}{n}{m}}
\newtheorem{thm}{Theorem}[section]
\newtheorem{prop}[thm]{Proposition}
\newtheorem{lem}[thm]{Lemma}
\newtheorem{cor}[thm]{Corollary}
\newcommand{\Bollobas}{Bollob{\'a}s}
\newcommand{\eps}{\varepsilon}
\newcommand{\aas}{a.a.s.}
\newcommand{\Dknm}{\ensuremath{\Dcal_k(n,m)}}
\newcommand{\tDknm}{\ensuremath{\tilde\Dcal_k(n,m)}}
\newcommand{\hDknm}{\ensuremath{\hat\Dcal_k(n,m)}}
\newcommand{\cDknm}{\ensuremath{\check\Dcal_k(n,m)}}
\newcommand{\ds}{\mathbf{d}}
\newcommand{\hs}{\mathbf{h}}
\newcommand{\js}{\mathbf{j}}
\newcommand{\gs}{\mathbf{g}}
\newcommand{\Ys}{\mathbf{Y}}
\DeclareMathOperator{\multisub}{multi}
\newcommand{\multi}{\ensuremath{\mathop{G_{\multisub}}}}
\newcommand{\multipar}[4][]{\multi #1(#2, #3, #4 #1)}
\newcommand{\multiknm}[1][]{\multipar[#1]{k}{n}{m}}
\newcommand{\simple}{\ensuremath{\mathop{G}}}
\newcommand{\simplepar}[4][]{\ensuremath{\simple #1(#2,#3, #4
    #1)}} 
\newcommand{\simpleknm}[1][]{\simplepar[#1]{k}{n}{m}}
\title{On the robustness of random $k$-cores}
\author{Cristiane M.~Sato\footnote{The author received an Ontario
    Graduate Scholarship during this project.}\\
  Dept.\ of Combinatorics and Optimization\\
  University of Waterloo\\
  Waterloo ON\\
  Canada N2L 3G1\\
  \small{E-mail: {\tt  cmsato@gmail.com}}
}
\date{\today}
\begin{document}

\maketitle

\begin{abstract}
  The $k$-core of a graph is its maximal subgraph with minimum degree
  at least $k$.  In this paper, we address robustness questions about
  $k$-cores. Given a $k$-core, remove one edge uniformly at random
  and find its new $k$-core. We are interested in how many vertices
  are deleted from the original $k$-core to find the new one. This can
  be seem as a measure of robustness of the original $k$-core. We
  prove that, if the initial $k$-core is chosen uniformly at random
  from the $k$-cores with $n$ vertices and $m$ edges, its
  robustness depends essentially on its average degree~$c$. We prove
  that, if $c\to k$, then the new $k$-core is empty with probability
  $1+o(1)$. We define a constant $c_k'$ such that when $k+\eps< c <
  c_k'-\eps$, the new $k$-core is empty with probability bounded away
  from zero and, if $c > c_k'+ \psi(n)$ with $\psi(n) =
  \omega(n^{-1/4})$, $\psi(n) > 0$ and $c$ is bounded, then
  the probability that the new $k$-core has less than $n-h(n)$
  vertices goes to zero, for every $h(n) =
  \omega(\psi(n)^{-1})$.
\end{abstract}

\section{Introduction}

The $k$-core of a graph is its maximal subgraph with minimum degree at
least~$k$.  The $k$-core of a graph is unique
and it can be obtained by iteratively deleting vertices of degree
smaller than $k$. The $k$-core of a graph that already has minimum
degree at least~$k$ is the graph itself. So we also say that graphs
(and multigraphs) with minimum degree at least~$k$ are $k$-cores.

The investigation of $k$-cores in random graphs was started by
\Bollobas~\cite{Bollobas84} in 1984 in connection with $k$-connected
subgraphs in random graphs. There has been much success in the use of
$k$-cores due to their amenability to analysis. For some earlier
results on the $k$-cores of random graphs,
see~\cite{Luczak91,Luczak92,Molloy92}.

A seminal result in this area was proved by Pittel, Spencer and
Wormald~\cite{PittelSpencerWormald96}: they determined the threshold
$c_k$ for the emergence of a giant $k$-core in $G(n,m)$. Roughly
speaking, if the average degree is below this threshold, the $k$-core
of $G(n,m)$ is empty with probability going to $1$ as $n\to\infty$,
and above the threshold the $k$-core has a linear number of vertices
with probability going to $1$. After this result, many proofs using a
variety of techniques were given for the emergence of a giant $k$-core
in graphs and hypergraphs;
see~\cite{Cooper04,FernholzRamachandaram04,CainWormald06,Kim06,
  JansonLuczak07,JansonLuczak08,Riordan08}.

We are interested in finding how robust this giant $k$-core of
$G(n,m)$ is as a $k$-core. More precisely, if we delete a random edge
in the $k$-core of $G(n,m)$ and obtain its new $k$-core, is the new
$k$-core much smaller than the original one? This can be seen as a
measure of the robustness of the giant $k$-core. We do not restrict
ourselves to the $k$\nobreakdash-core of $G(n,m)$: we consider a $k$-core chosen
uniformly at random with given number of vertices and edges, then we
delete an edge from it uniformly at random and obtain the new
$k$-core. 

We define a constant $c_k'$ and analyse the behaviour of the random
$k$-cores with average degree below and above $c_k'$. We work with
multigraphs with given degree sequence and then we deduce the desired
results for simple graphs. Throughout the paper we use a simple
deletion algorithm (and some variants) to find the $k$-core of a
graph: the algorithm iteratively removes vertices of degree less than
$k$ until all remaining vertices have degree at least $k$. We couple
this deletion algorithm with a random walk. For the case with
bounded average degree $c > c_k'+\psi(n)$ with $\psi(n) =
\omega(n^{-1/4})$ and $\psi(n)>0$, this strategy works quite well: we
prove that the deletion algorithm and the random walk both
terminate/die in less than $t(n)$ steps with probability going to $1$,
for every $t(n) = \omega(\psi(n)^{-1})$. This also implies that, when
$2m/n = c_k+\phi(n) > c_k+n^{-\delta}$, where $\delta \in(0,1/4)$
is a constant, the probability of deleting $\omega(\psi(n)^{-1})$
vertices of the $k$-core of $G(n,m)$ to find its new $k$-core after
deleting a single random edge goes to zero.

For the case with average degree $c\leq c_k'-\eps$ where $\eps$ is a
positive constant, we use the random walk to show that, for any
$h(n)\to\infty$, with probability going to $1$, the deletion algorithm
deletes $\Theta(n)$ vertices or at most $h(n)$ vertices. When $c\to
k$, the probability of deleting $\Theta(n)$ vertices goes to~$1$. Then
we use the differential equation method as described
in~\cite{Wormald99} to show that, if $\Theta(n)$ vertices are deleted,
then the deletion algorithm will not stop until the $k$-core has less
than $\gamma n$ vertices \aas\ (where we can choose $\gamma$ as small
as we want).  Using a result in~\cite{JansonLuczak07}, we prove that
in this case the $k$-core must be empty \aas{} This finishes the proof
that, for $k+\eps \leq c\leq c_k'+\eps$ and any $h(n)\to\infty$, the
deletion algorithm deletes $n$ vertices or at most $h(n)$ vertices
\aas; and that for $c\to k$, we delete $n$ vertices \aas{} Proving that
the probability of deleting all vertices in the case $k+\eps \leq
c\leq c_k'-\eps$ is bounded away from zero require some more work: we
couple the deletion algorithm for multigraphs and simple graphs for
$t(n)\to\infty$ steps. This will then imply that the probability of
deleting $h(n)$ vertices for some $h(n)\to\infty$ is bounded away from
zero and so we must delete all vertices with probability bounded away
from zero.


\section{Main results}

Let $\simple = \simpleknm$ be a graph sampled uniformly at random from
the (simple) $k$-cores with vertex set $[n]$ and $m = m(n)$ edges. For
any graph $H$, let $K(H)$ denote the $k$-core of $H$ and let $W(H)$ be
$\card{V(H)} - \card{V(K( H - e))}$, where $e$ is an edge chosen
uniformly at random from the edges of~$H$. That is, $W(H)$ is the
number of vertices we delete from $H-e$ to obtain its $k$-core. 

For
every $k\geq 0$, let
\begin{equation*}
  f_k(\lambda) = e^\lambda - \sum_{i=0}^{k-1}
  \frac{\lambda^i}{i!}
  \quad\text{and}\quad
  h_k(\mu) = \frac{e^\mu \mu}{f_{k-1}(\mu)}. 
\end{equation*}
 For
$k\geq 3$, let $c_k = \inf\set{h_k(\mu) \st \mu > 0}$ and let
$\mu_{k,c_k}$ be such that $c_k = h_k(\mu_{k,c_k})$. We discuss the
existence of $c_k$ and $\mu_{k,c_k}$ later. Let
\begin{equation*}
  c_k' = \frac{\mu_{k, c_k}
  f_{k-1}(\mu_{k, c_k})}{f_k(\mu_{k, c_k})}.
\end{equation*}

Throughout the text, let $c = 2m/n$. The asymptotics will always be
with respect to $n\to \infty$. For a sequence of probability spaces
$(\Omega_n, \mathbb{P}_n)_{n\in\setN}$, we say that a sequence of
events $(E_n)_{n\in\setN}$ holds asymptotically almost surely (\aas)
if $\mathbb{P}_n(E_n) \to 1$ as $n\to\infty$.

\begin{thm}
\label{thm:main}
Let $k\geq 3$ be a fixed integer. Let $m = m(n)$ and $c = 2m/n$. Then
the following hold.
\begin{itemize}
\item[(i)] If $c\geq k$ and $c\to k$, then $W(\simpleknm) = n$ \aas
\item[(ii)] Let $\eps > 0$ be a fixed real. Suppose that $k+\eps \leq
  c\leq c_k'-\eps$. For any function $h(n)\to \infty$, we have that
  \aas\ $W(\simpleknm) \leq h(n)$ or $W(\simpleknm) = n$. Moreover,
  $W(\simpleknm) = n$ with probability bounded away from zero.
\item[(iii)] Let $\psi(n) = \omega(n^{-1/4})$ be a positive function
  and let $C_0$ be a constant.  Suppose that $c_k'+\psi(n)\leq c\leq
  C_0$. For every $h(n) = \omega(\psi(n)^{-1})$, we have that
  $\prob{W(\simpleknm)\geq h(n)}\to 0$.
\end{itemize}
\end{thm}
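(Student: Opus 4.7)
The plan is to work in the configuration model throughout, using $\simpleknm$ as $\multiknm$ conditioned on simplicity. By standard facts about uniform $k$-cores, the degree sequence of a random $k$-core with average degree $c$ behaves asymptotically as i.i.d.\ $\poisson{\mu}$-samples conditioned on being $\geq k$, where $\mu = \mu_{k,c}$ solves $c = \mu f_{k-1}(\mu)/f_k(\mu)$. After deleting a uniformly random edge, I run the peeling algorithm (iteratively removing a vertex of degree less than $k$) and let $Y_t$ denote the number of light vertices after $t$ peels; then $W$ equals the number of steps before $Y_t$ hits $0$. Revealing one neighbor at a time, each of the $k-1$ remaining edges of a peeled light vertex exposes a size-biased neighbor that becomes light iff it had degree exactly $k$, yielding one-step expected increment
\[
\Delta = -1 + \frac{\mu^{k-1}}{(k-2)!\,f_{k-1}(\mu)}.
\]
A direct computation of $h_k'(\mu) = [e^\mu(1+\mu)f_{k-1}(\mu) - e^\mu \mu f_{k-2}(\mu)]/f_{k-1}(\mu)^2$ shows that $h_k'(\mu^*) = 0$ iff $f_{k-1}(\mu^*) = (\mu^*)^{k-1}/(k-2)!$, which is exactly $\Delta = 0$. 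Hence the minimizer $\mu^* = \mu_{k,c_k}$ of $h_k$ is precisely the zero-drift point, the corresponding average degree is $c_k'$, and $\Delta > 0$ for $c < c_k'$ while $\Delta < 0$ for $c > c_k'$. I then couple $(Y_t)$ with a random walk on $\setZ_{\ge 0}$ absorbed at $0$ whose drift matches $\Delta$ as long as the residual degree sequence stays near its initial truncated-Poisson form.

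For case (iii), $\Delta = -\Theta(\psi(n))$. Hitting-time estimates for a negative-drift walk on $\setZ_{\ge 0}$ started at a bounded state show it is absorbed within $O(\psi(n)^{-1})$ steps \whp. Because $\psi(n)^{-1} = o(n^{1/4})$, the empirical degree distribution shifts by $o(n^{1/4})$ over this window and the drift bound persists; hence $W \le h(n)$ \whp\ for any $h(n) = \omega(\psi(n)^{-1})$.

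For cases (i) and (ii), $\Delta$ is a positive constant bounded away from $0$. A positive-drift walk on $\setZ_{\ge 0}$ from a bounded start is absorbed at $0$ with some probability $p \in (0,1)$, and conditional on absorption the hitting time has a fixed finite distribution; so on the die event $W \le h(n)$ \whp\ for any $h(n)\to\infty$. On the survival event, $Y_t$ reaches $\Theta(n)$ in linear time, at which point I switch to \Wormald's differential equation method to track the evolving degree sequence. The DE system has strictly positive peeling rate until the residual $k$-core has fewer than $\gamma n$ vertices for any preselected $\gamma > 0$; at that point the Janson--\Luczak{} result on small $k$-cores forces it to be empty, so survival implies $W = n$. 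In case (i), $c\to k$ forces $\mu \to 0$ and $\Delta \to k-2 \ge 1$, so $p = o(1)$ and $W = n$ a.a.s. In case (ii), both $p$ and $1-p$ are bounded away from $0$, yielding the dichotomy $W \le h(n)$ or $W = n$, and the lower bound on $\prob{W=n}$.

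I expect the hardest step to be the multigraph-to-simple-graph transfer in case (ii): a one-shot conditioning on simplicity only preserves $1-o(1)$ events, so it cannot move the $W = n$ event---whose probability is merely bounded away from $0$---between $\multiknm$ and $\simpleknm$. To handle this I would construct an explicit coupling of the peeling processes in the two models over $\omega(1)$ joint steps, enough to lift the positive-probability die-quickly event, and hence its complement, to simple graphs. Secondary technical steps are the DE-method execution in the linear-peeling regime and the concentration argument needed to maintain the drift approximation through $O(\psi(n)^{-1})$ steps in case (iii).
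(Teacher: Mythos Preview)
Your proposal is correct and follows essentially the same approach as the paper: couple the peeling process with a biased random walk whose drift changes sign at $c_k'$, use hitting-time estimates for case~(iii), use the survival/extinction dichotomy plus \Wormald's differential equation method and the Janson--\L uczak small-core lemma for cases~(i) and~(ii), and handle the simple-graph transfer in case~(ii) by an explicit $\omega(1)$-step coupling of the peeling processes in $\multiknm$ and $\simpleknm$. You have also correctly identified the coupling in case~(ii) as the most delicate step; the paper carries it out via McKay's edge-probability estimates for random graphs with given degree sequence. One minor slip: your $Y_t$ should count unexplored half-edges at light vertices (what the paper calls marked points), not light vertices themselves, since your drift formula and your ``one neighbor at a time'' description are both per-edge.
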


We apply Theorem~\ref{thm:main} to study the robustness of the
$k$-core of $G(n,m)$, the random graph chosen uniformly at random
from all graphs on $[n]$ with $m$ edges.
\begin{cor}
  \label{cor:Gnm}
  Let $k\geq 3$ be a fixed integer.  Let $m = m(n)$ and suppose that
  $c = 2m/n = c_k+\psi(n) \geq c_k+ n^{-\delta}$ and $c\leq C_0$,
  where $\delta$ is a constant in $(0,1/4)$ and $C_0$ is a
  constant. Then, for every $h(n) = \omega(\psi(n)^{-1})$, we have
  that $\prob{W\paren[\big]{K\paren[\big]{G(n,m)}} \geq h(n)}\to 0$.
\end{cor}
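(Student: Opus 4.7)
The plan is to deduce the corollary from Theorem~\ref{thm:main}(iii) by conditioning on the order and size of $K(G(n,m))$ and then showing that the core's average degree exceeds $c_k'$ by a margin that is $\omega(n^{-1/4})$.

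\emph{Step 1: conditional uniformity of the core.} I would first establish that, on the event $\{V(K(G(n,m))) = V^*,\; e(K(G(n,m))) = m^*\}$, the graph $K(G(n,m))$ is, up to relabelling, distributed as $\simplepar{k}{n^*}{m^*}$, where $n^*=|V^*|$. The key observation is that $K(G)=H$ for a specific $k$-core $H$ supported on $V^*$ is equivalent to $G[V^*]=H$ together with the requirement that the peeling process applied to $G$ removes precisely $[n]\setminus V^*$. Since $H$ is already a $k$-core, the vertices of $V^*$ never get peeled regardless of which edges $H$ contains; hence the number of ``mantle'' extensions of $H$ into such a $G$ depends on $H$ only through $|V^*|$ and $m^*$. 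Summing over admissible $H$ and using vertex-relabelling symmetry yields the claimed conditional distribution.

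\emph{Step 2: size and average degree of the core.} I would next invoke the giant-core description of Pittel--Spencer--Wormald and Janson--\L uczak: for $c\in[c_k+\psi(n),C_0]$, letting $\mu=\mu(c)$ be the larger root of $h_k(\mu)=c$, we have \whp\ $n^*/n = f_k(\mu)e^{-\mu} + o(1)$ and $2m^*/n = \mu f_{k-1}(\mu)e^{-\mu} + o(1)$, with concentration errors that are polynomially small in $n$. Because $h_k$ attains its minimum $c_k$ at $\mu_{k,c_k}$ with $h_k''(\mu_{k,c_k})>0$, writing $c=c_k+\psi(n)$ gives $\mu-\mu_{k,c_k}=\Theta(\sqrt{\psi(n)})$ on the large-$\mu$ branch. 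A short calculation using $f_j'=f_{j-1}$ together with the first-order condition $(\mu+1)f_{k-1}(\mu)=\mu f_{k-2}(\mu)$ at $\mu_{k,c_k}$ shows that the function $g(\mu):=\mu f_{k-1}(\mu)/f_k(\mu)$ has $g'(\mu_{k,c_k})>0$, so $g(\mu)-c_k'=\Theta(\sqrt{\psi(n)})$. Because $\psi(n)\ge n^{-\delta}$ with $\delta<1/4$ makes $\sqrt{\psi(n)}=\omega(n^{-1/4})$ dominate the concentration error, one obtains \whp\ $n^*=\Theta(n)$ together with
\[
  \frac{2m^*}{n^*} \;\ge\; c_k' + \psi'(n), \qquad \psi'(n)=\Theta\bigl(\sqrt{\psi(n)}\bigr)=\omega(n^{-1/4}).
\]

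\emph{Step 3: applying Theorem~\ref{thm:main}(iii).} On the \whp\ event from Step~2, the conditional distribution of $K(G(n,m))$ is that of $\simplepar{k}{n^*}{m^*}$ with average degree at least $c_k'+\psi'(n)$ and bounded above by $2m/n^* = O(1)$ since $c\le C_0$ and $n^*=\Theta(n)$. The assumption $h(n)=\omega(\psi(n)^{-1})$ combined with $\psi'(n)^{-1}=\Theta(\psi(n)^{-1/2})=o(\psi(n)^{-1})$ (using $\psi(n)\to 0$) yields $h(n)=\omega(\psi'(n)^{-1})$, so Theorem~\ref{thm:main}(iii) applied with parameters $(n^*,m^*,\psi'(n))$ gives $\prob{W(\simplepar{k}{n^*}{m^*})\ge h(n)}=o(1)$. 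Combining this with the vanishing probability of the exceptional event via the tower rule completes the proof.

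The main obstacle is Step~2: showing that the deterministic shift $g(\mu)-c_k'$ is both positive and of order $\omega(n^{-1/4})$, so that it survives the stochastic fluctuations of $(n^*,m^*)$ in the near-threshold regime. The square-root behaviour of $\mu-\mu_{k,c_k}$ at the minimum of $h_k$, combined with the restriction $\delta<1/4$, is exactly what ensures the shift dominates the noise.
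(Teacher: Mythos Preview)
Your approach is essentially the paper's: invoke the Pittel--Spencer--Wormald description of the core's order and size, show that the core's average degree exceeds $c_k'$ by a margin that is $\omega(n^{-1/4})$, and then apply Theorem~\ref{thm:main}(iii). Two remarks on the details. First, your Step~1 (conditional uniformity of the core given its vertex set and edge count) is more explicit than the paper, which simply writes ``we can now apply Theorem~\ref{thm:main}(iii)'' without justifying why the core is a uniform $\simplepar{k}{n^*}{m^*}$; your argument for this is correct and fills that gap. Second, your Step~2 estimate $g(\mu)-c_k'=\Theta(\sqrt{\psi})$ is sharper than the paper's, which only uses $\mu_{k,c}-\mu_{k,c_k}=\Omega(c-c_k)$ together with monotonicity of $g(\lambda)=\lambda f_{k-1}(\lambda)/f_k(\lambda)$ to get a shift of order $\Omega(\psi)$; either bound suffices since $\psi\ge n^{-\delta}$ with $\delta<1/4$ already gives $\omega(n^{-1/4})$.

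One small slip in your Step~3: you write ``using $\psi(n)\to 0$'' to obtain $\psi'(n)^{-1}=o(\psi(n)^{-1})$, but the hypotheses only give $n^{-\delta}\le\psi(n)\le C_0-c_k$, so $\psi$ need not tend to zero. This is harmless: since $\sqrt{\psi}\ge\text{const}\cdot\psi$ whenever $\psi$ is bounded, one always has $\psi'(n)^{-1}=O(\psi(n)^{-1})$, and hence $h(n)=\omega(\psi^{-1})$ still yields $h(n)=\omega((\psi')^{-1})$ as required.
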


We remark that there are some known results about the $k$-core of
random graphs with given degree sequence under some constraints on the
degree sequences
(see~\cite{JansonLuczak07,Cooper04,FernholzRamachandaram04}). Since
the degree sequence of a graph $G$ and the degree sequence of $G-e$
for some edge $e\in E(G)$ are very similar, it is intuitive that one
can draw some conclusions about $W(G(k, n, m))$. Indeed, in the case
$c\in [c_k+\eps, C_0]$ one can use~\cite{JansonLuczak07} to conclude
that $W(G(k,n,m)) = o(n)$ \aas{} We were not able to derive results for
the cases (i) and (ii) directly from known results.


\subsection{Models of random multigraphs}

We use the allocation model restricted to $k$-cores (here we allow
multigraphs): let $a: [2m] \to [n]$ be chosen uniformly at random
among the functions such that $|a^{-1}(v)| \geq k$ for any $v\in [n]$;
let $\multi = \multiknm$ be the multigraph on $[n]$ obtained by adding
an edge joining $a(i)$ and $a(m+i)$ for every $i\in [m]$. Then every
simple $k$-core with $n$ vertices and $m$ edges is generated by $m!
2^m$ allocations. This implies that $\multiknm$ conditioned upon
simple graphs is a uniform probability space on $k$-cores with vertex
set $[n]$ and $m$ edges. Multigraphs do not necessarily have the same
probability in $\multiknm$.

Let $\Dknm$ be the set of $\ds\in\setN^n$ with $\sum_{i=1}^n d_i = 2m$
and $\min_i d_i \geq k$. For every multigraph $H$ with vertex set
$[n']$, let $\ds(H)$ denote the degree sequence of $H$, that is,
$(\ds(H))_i$ is the degree of vertex $i$. For any $\ds=(d_1,\dotsc,
d_n)\in \setN^n$, let $D_j(\ds)$ be the number of occurrences of $j$
in $\ds$ and let $\eta(\ds) = \sum_{i=1}^n \binom{d_i}{2}/m$. We will
work with $k$-cores generated using the pairing model with degree
sequences in $\Dknm$. Given a degree sequence $\ds$, let $\multi(\ds)$
denote the graph generated using the pairing model: arbitrarily choose
a partition of $[2m]$ into sets $S_1,\dotsc, S_n$ (which we call bins)
such that $|S_i| =d_i$ for very $i$, add a perfect matching uniformly
at random on $[2m]$ and contract each $S_i$ to obtain a
multigraph. Then $\multiknm$ conditioned upon $\ds(\multiknm)=\ds$ has
the same distribution as $\multi(\ds)$.

It is clear that $\ds(\multi)$ has multinomial distribution
conditioned upon each coordinate being at least $k$, which we denote
by $\multinomialknm$. We say that a variable $Y$ taking integer values
has truncated Poisson distribution with parameters $(k,\lambda)$
(which we denote by $\tpoisson{k}{\lambda}$) if, for every integer
$j$,
\begin{equation*}
  \prob{Y = j}
  =
  \begin{cases}
    {\displaystyle \frac{\lambda^j}{j!f_k(\lambda)}},& \text{ if }j\geq k;\\
    0,&\text{ otherwise.}
  \end{cases}
\end{equation*}
By straightforward computations, one can show that $\multinomialknm$
has the same distribution as $\Ys = (Y_1,\dotsc, Y_n)$ where the
$Y_i$'s are independent truncated Poisson variables with parameters
$(k, \lambda)$ conditioned upon the event $\Sigma$ that $\sum_{i=1}^n Y_i =
2m$.


\section{Random walks and a deletion procedure}

\subsection{A deletion procedure}
\label{sec:exploration_multi_super}
We are given a degree sequence $\ds\in\Dknm$. Here we describe a
procedure for finding the $k$-core of $\multi(\ds)-e$, where $e$ is a
random edge in $\multi(\ds)$. We will sample $\multi(\ds)$ using the
pairing model by discovering one edge at a time. We start by choosing
$e$ by picking two points uniformly at random from the set of all
points.

\medskip

\noindent\textbf{Deletion procedure $(\ds)$}
  \begin{itemize}
  \item Partition $[2m]$ into $n$ bins $S_1,\dotsc, S_n$ such that
    $|S_i|=d_i$ for every $1\leq i\leq n$.
  \item Iteration $0$: Choose $e$ by picking distinct points $u$ and
    $v$ uniformly at random from $[2m]$. Delete $u$ and $v$ and
    \textbf{mark} all points in bins of size less than $k$.
  \item Loop: While there is a marked undeleted point, choose one such point
    $u$ and find the other end $v$ of the edge incident to $u$. Delete
    $u$ and $v$. If $v$ was in a bin of size exactly $k$ (now of size
    $k-1$ because we deleted $v$), mark all the
    other points in this bin.
  \end{itemize}

  After the deletion procedure is over, the $k$-core can be obtained
  by adding a random matching uniformly at random on the surviving
  points. Let $Z_0(\ds)$ denote the number of marked points after the
  deletion of the edge $e$ chosen in Iteration~$0$. Note that
  $Z_0(\ds)\in\set{0, k-2,k-1,2(k-1)}$.

  Let $Y_j(\ds)$ be the number of undeleted marked points after the
  $j$-th iteration of the loop (and $Y_0(\ds) := Z_0(\ds)$). The
procedure stops when $Y_j(\ds) = 0$.  Let $Z_j(\ds)$ be the number of
points that are marked in the $j$-th iteration of the loop. Let
$W(\ds) = \sum_{j}\ceil[\big]{\frac{Z_j(\ds)}{k-1}}$. Note that
$W(\ds) = W(\multi(\ds))$.

We mark new points in an iteration of the loop if $v$ lies in a bin of
(current) size~$k$. The probability that this happens (denoted by
$p_j(\ds)$) is the ratio of the number of unmarked points in bins of
(current) size $k$ and the number of undeleted points other than the
one we are exploring.  If $v$ is also a marked point, then no new
points will be marked and $v$ is deleted. In this case, $Z_j(\ds) =
-1$ and the probability that this happens (denoted by $p_j'(\ds)$) is
the ratio of the marked undeleted points other than $u$ and the number
of undeleted points other than $u$. Thus, in the $j$-th iteration of
the loop,
 \begin{equation*}
   Z_j(\ds)
   =
   \begin{cases}
     k-1,&\text{ with probability } p_j(\ds);\\
     -1,& \text{ with probability } p_j'(\ds)\\
     0,&\text{otherwise}.
   \end{cases}
 \end{equation*}
 The probabilities of $p_j(\ds)$ and $p_j'(\ds)$ are analyzed later.
 
\subsection{Random walks}
Given $c$ and $k$, we will define random walks in $\setZ$ that will
help us to study the behaviour of the deletion procedure. Let
$\lambda_{k,c}$ be the (unique) positive root of $\lambda
f_{k-1}(\lambda) /f_{k}(\lambda) = c$. Such root always exists for $c
> k$. For more properties of $\lambda_{k, c}$,
see~\cite{PittelWormald03}. Let
\begin{equation*} q_{k,c} =
  \frac{\lambda_{k,c}^{k-1}}{(k-1)!f_{k-1}(\lambda_{k,c})}.
\end{equation*}

Let $Z(k,c)$ be a random variable such that
\begin{align*}
  Z(k,c)
  &=
  \begin{cases}
    k-1,&\text{with probability }  q_{k,c};\\
    0,&\text{otherwise.}
  \end{cases}
\end{align*}
Let $Y_0 = Z_0(\ds)$. For $j>0$, let $Y_j = Y_{j-1}+ Z_j-1$ where
$Z_j$ has same distribution as $Z(k,c)$ and the variable $Z_j$ is
independent from $Z_1,Z_2,\dotsc, Z_{j-1}$. Thus, we defined a random
walk such that the position in iteration $j$ is $Y_j$ and the drift is
given by $Z_j-1$. Similarly, for $\xi =\xi(n) \geq 0$ and $\xi \leq
1-q_{k,c}$, define the random variable $Z^+(k,c,\xi)$ by
\begin{align*}
  Z^+(k,c)
  &=
  \begin{cases}
    k-1,&\text{with probability }  q_{k,c}+\xi;\\
    0,&\text{otherwise.}
  \end{cases}
\end{align*}
Let $Y_0^+ = Z_0(\ds)$. For $j>0$, let $Y_j^+ = Y_{j-1}^+ +
Z_j^+-1$ where $Z_j^+$ has same distribution as $Z^+(k,c,\xi)$ and the
variable $Z_j^+$ is independent from $Z_1^+,Z_2^+,\dotsc, Z_{j-1}^+$.
Note that $(Y_j)_{j\in\setN}$ and $(Y_j^+)_{j\in\setN}$ are actually
branching processes.

For $\xi =\xi(n) \geq 0$ and $\xi \leq q_{k,c}$, define
the random variable and $Z^-(k,c,\xi)$ by
\begin{align*}
  Z^-(k,c)
  &=
  \begin{cases}
    k-1,&\text{with probability }  q_{k,c}-\xi;\\
    -1, &\text{with probability }  \xi;\\
    0,&\text{otherwise.}
  \end{cases}
\end{align*}
Let $Y_0^- = Z_0(\ds)$. For $j>0$, let $Y_j^- = Y_{j-1}^- +
Z_j^- -1$ where $Z_j^-$ has same distribution as $Z^-(k,c,\xi)$ and
the variable $Z_j^-$ is independent from $Z_1^-,Z_2^-,\dotsc,
Z_{j-1}^-$.

We say that $Y_j$ is the number of particles alive in iteration $j$
and that $Z_j$ is the number of particles born in iteration $j$ (and
similarly for $Y_j^+$, $Z_j^+$, and $Y_j^-$, $Z_j^-$).

The random walk given by $Z^+(k,c,\xi)$ is going to be used to bound
the number of marked points in the deletion process by above, while
the random walk given by $Z^-(k,c,\xi)$ will bound it from below.
Here we will prove some properties of these random walks.

Recall that $h_k(\mu) = \mu e^\mu/f_{k-1}(\mu)$ and $c_k =
\inf\set{h_k(\mu):\mu > 0)} = h_k(\mu_{k,c_k})$. Here we justify why
the infimum is reached and why it is reached by a unique $\mu$. It is
easy to see that $h_k$ is differentiable. Moreover, $h_k(\mu)\to\infty$ when
$\mu \to 0$ and when $\mu \to \infty$. The first derivative of
$h_k(\mu)$ is
\begin{equation*}
  \frac{e^\mu}{f_{k-1}(\mu)}\paren[\Big]{1+\mu-
    \mu\frac{f_{k-2}(\mu)}{f_{k-1}(\mu)}}
\end{equation*}
Using the fact that $f_{k-2}(\mu) = f_{k-1}(\mu) + \mu^{k-2}/(k-2)!$,
it is clear that this derivative is at least $0$ iff
\begin{equation}
  \label{eq:min_req}
 \frac{\mu^{k-1}}{(k-2)!} \leq f_{k-1}(\mu)
\end{equation}
and the functions on both sides are convex and increasing for $\mu >
0$.  Thus, the function $h_k(\mu)$ must reaches its infimum in a
unique point $\mu_{k,c_k}$ and the equation $h_k(\mu) = c$ has exactly
two roots when $c > c_k$. Let $\mu_{k,c}$ denote the largest root of
the equation $h_k(\mu) = c$. Recall that $c_k' = h_k(\mu_{k,c_k})$.

\begin{prop}
  \label{prop:mean}
  The following hold:
  \begin{itemize}
  \item[(i)] $\esp{Z(k,c)}$ is a strictly decreasing function of $c$
    for $c > k$ and $\esp{Z(k, c_k')} = 1$.
  \item[(ii)] For any $\eps > 0$ with $c_{k}'-\eps > k$, there exists
    a positive constant $\alpha$ such that $\esp{Z(k,c_k'-\eps)} >
    1+\alpha$.
  \item[(iii)] Let $\psi(n)$
  be a nonnegative function with $\psi(n) \leq C_0$, where $C_0$ is
  constant. There exists a positive constant $\beta$ such that
  $\esp{Z(k,c_k'+\psi(n))} \leq 1-\beta\psi(n)$.
  \end{itemize}     
\end{prop}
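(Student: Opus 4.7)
The plan is to rewrite $\esp{Z(k,c)}$ as a convenient function of $\lambda_{k,c}$ and then extract all three claims from monotonicity (plus a first‐order expansion at $c = c_k'$).

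First I would note that, since $Z(k,c)$ takes only the values $k-1$ and $0$,
\begin{equation*}
  \esp{Z(k,c)} \;=\; (k-1)q_{k,c}
  \;=\; \frac{\lambda_{k,c}^{k-1}}{(k-2)!\,f_{k-1}(\lambda_{k,c})}
  \;=:\; r(\lambda_{k,c}).
\end{equation*}
Pulling a factor $\lambda^{k-1}/(k-1)!$ out of the denominator sum and reindexing by $i = j-(k-1)$ rewrites
\begin{equation*}
  r(\lambda) \;=\; \left(\sum_{i\geq 0}\lambda^{i}\,\frac{(k-2)!}{(i+k-1)!}\right)^{-1}.
\end{equation*}
Every coefficient on the right is positive, so the bracketed series is strictly increasing in $\lambda > 0$, and therefore $r$ is strictly \emph{decreasing} on $(0,\infty)$, with $r(\lambda)\to k-1$ as $\lambda\downarrow 0$ and $r(\lambda)\to 0$ as $\lambda\to\infty$. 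Secondly, $\lambda_{k,c}$ is defined by $\lambda f_{k-1}(\lambda)/f_k(\lambda)=c$, and the left-hand side is exactly the expectation of a $\tpoisson{k}{\lambda}$ variable, hence strictly increasing and continuous in $\lambda$; inverting, $c\mapsto \lambda_{k,c}$ is strictly increasing and smooth on $(k,\infty)$. Composing, $c\mapsto \esp{Z(k,c)} = r(\lambda_{k,c})$ is strictly decreasing on $(k,\infty)$, which is the first half of (i).

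For the equality $\esp{Z(k,c_k')}=1$, I would recall from the discussion preceding \eqref{eq:min_req} that $h_k'(\mu)=0$ at $\mu=\mu_{k,c_k}$ is equivalent to $\mu^{k-1}/(k-2)! = f_{k-1}(\mu)$, i.e.\ to $r(\mu_{k,c_k})=1$. By the definition $c_k' = \mu_{k,c_k}f_{k-1}(\mu_{k,c_k})/f_k(\mu_{k,c_k})$ one has $\lambda_{k,c_k'} = \mu_{k,c_k}$, so $\esp{Z(k,c_k')} = r(\mu_{k,c_k})=1$. This finishes (i). For (ii), fix $\eps>0$ with $c_k'-\eps>k$; by strict monotonicity of $c\mapsto r(\lambda_{k,c})$ at $c_k'$, the value $\esp{Z(k,c_k'-\eps)}$ strictly exceeds $1$, so any $\alpha \in (0,\esp{Z(k,c_k'-\eps)}-1)$ works.

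The main quantitative content sits in (iii). I would combine differentiability and compactness: on $[c_k', c_k'+C_0]$ the map
\begin{equation*}
  \Phi(c) \;=\; \esp{Z(k,c)} \;=\; r(\lambda_{k,c})
\end{equation*}
is $C^1$, and its derivative $\Phi'(c) = r'(\lambda_{k,c})\,\dfrac{d\lambda_{k,c}}{dc}$ is continuous. Since $r$ is strictly decreasing and $r'(\mu_{k,c_k})\neq 0$ (differentiating the explicit series expression above shows $r'<0$ everywhere on $(0,\infty)$), and since $d\lambda_{k,c}/dc>0$, one has $\Phi'<0$ on the whole compact interval $[c_k',c_k'+C_0]$. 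By continuity, $\sup_{c\in[c_k',c_k'+C_0]}\Phi'(c) = -2\beta<0$ for some $\beta>0$, and the mean value theorem gives, for every $c = c_k'+\psi(n)$ in this range,
\begin{equation*}
  \esp{Z(k,c_k'+\psi(n))} \;=\; 1 + \psi(n)\,\Phi'(c^{*}) \;\leq\; 1-\beta\psi(n),
\end{equation*}
proving (iii). The one technical point requiring care, and the only place a calculation is really needed, is verifying $r'<0$ (or equivalently the strict monotonicity of the series above term-by-term) so that $\Phi'$ is strictly and uniformly negative on the compact range; everything else reduces to the monotonicity already obtained.
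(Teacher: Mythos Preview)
Your proof is correct and follows essentially the same route as the paper: write $\esp{Z(k,c)}=(k-1)q_{k,c}$, show it decreases in $\lambda$ while $\lambda_{k,c}$ increases in $c$, identify $\lambda_{k,c_k'}=\mu_{k,c_k}$ via \eqref{eq:min_req} to get the value $1$, and then use smoothness plus compactness and the mean value theorem for the quantitative bounds in (ii)--(iii). The only cosmetic difference is that the paper establishes $\partial q_{k,c}/\partial\lambda<0$ by computing the derivative explicitly as $\lambda^{k-2}(k-1-\esp{\tpoisson{k-1}{\lambda}})/((k-2)!f_{k-1}(\lambda))$, whereas you obtain the same monotonicity by rewriting $r(\lambda)^{-1}$ as a power series with positive coefficients; both are equally valid.
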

\begin{proof}
  Let $g(c) = \esp{Z(k,c)}$. Note that $g(c) = (k-1) q_{k,c}$.  By the
  definition of $c_k'$, we have that~\eqref{eq:min_req} holds with
  equality for $\mu = \mu_{k, c_k}$.  This clearly implies $g(c_k') =
  1$. We have that $\lambda_{k,c}$ is a strictly increasing function of
  $c$ and vice-versa (see the derivative computation in~\cite[Lemma
  1]{PittelWormald03}). If $c > k$, then $\lambda_{k,c} > 0$. Thus, by
  considering $c = c(\lambda) = \lambda
  f_{k-1}(\lambda)/f_{k}(\lambda)$ and differentiating with respect
  to $\lambda$, we get
  \begin{equation*}
    \begin{split}
      \frac{d}{d\lambda}q_{k, c}
      &=
      \frac{\lambda^{k-2}
        \left(
          k-1
          -
          \esp{\tpoisson{k-1}{\lambda}}
        \right)}{(k-2)!f_{k-1}(\lambda)}
      < 0
    \end{split}
  \end{equation*}
  since $\esp{\tpoisson{k-1}{\lambda_{k,c}}} > k-1$. Thus, $g(c)$ is
  strictly decreasing for $c > k$. 

  It is easy to see that $c(\lambda)$ is a smooth function on $\lambda
  \in [\eps',\infty)$ for any $\eps' >0$. By the Inverse Function
  Theorem, this implies that $\lambda_{k,c}$ is a smooth function on
  $c\in[c(\eps'),C_0]$ and so $g(c)$ is a smooth function on
  $c$. Thus, the supremum $\sup\set{g'(c): c_k'\leq c\leq C_0}$ and
  the infimum $\inf\set{g'(c): c_k'\leq c\leq C_0}$ are both achieved
  and are both negative constants since $g(c)$ is strictly decreasing. By
  the Mean Value Theorem, there are positive constants $\alpha$ and
  $\beta$ such that $g(c) \geq 1 + \alpha|c-c_k'|$ for $c(\eps') < c < c_k'$ and
  $g(c) \leq 1 -\beta|c-c_k'|$ for $c_k' < c < C_0$.
 \end{proof}

\begin{prop}
  \label{prop:meangreaterthan1}
  Let $k,c,\xi$ be such that $\esp{Z^- (k,c,\xi)} > 1+ \eps$, for some
  constant $\eps > 0$.  Then $\prob{Y_j^- > 0,\ \forall j\geq 0}$ is
  bounded away from $0$ and, for any function $h(n)\to \infty$,
  \begin{equation*}
    \prob[\Big]{Y_j^- > 0,\ \forall j\geq h(n)}
    = 1 + o(1).
  \end{equation*}
\end{prop}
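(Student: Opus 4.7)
The plan is to view $(Y_j^-)_{j\geq 0}$ as a random walk on $\setZ$ with bounded i.i.d.\ increments $X_j := Z_j^- - 1 \in \set{-2,-1,k-2}$ whose common mean $\esp{X_j} = \esp{Z^-(k,c,\xi)}-1$ exceeds $\eps$ by hypothesis. Both assertions of the proposition then follow from standard exponential tail bounds for such walks, applied in two different regimes.

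For part~(ii) I would union-bound over $j \geq h(n)$. For each fixed $j$, Hoeffding's inequality applied to the bounded centred sum $\sum_{i=1}^{j}(X_i - \esp{X_i})$ gives $\prob{Y_j^- \leq 0} \leq \prob[\big]{\sum_{i=1}^{j}(X_i-\esp{X_i}) \leq -\eps j} \leq \exp(-c\eps^{2}j)$ for an absolute constant $c>0$ depending only on the width of the support of $X_i$ (which is at most $k$, independent of $n$). Summing the resulting geometric series yields $\prob[\big]{\exists\,j\geq h(n): Y_j^- \leq 0} = O\paren[\big]{\exp(-c\eps^{2}h(n))} = o(1)$ since $h(n)\to\infty$, which is exactly the second assertion.

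For part~(i) I would show that, starting from any $y_0 \geq 1$, the walk stays strictly positive forever with probability at least $1 - e^{-\theta y_0}$ for some constant $\theta>0$. Since $X_1$ is bounded with $\esp{X_1} \geq \eps > 0$, the Laplace transform $\psi(\theta) := \esp{e^{-\theta X_1}}$ satisfies $\psi(0)=1$ and $\psi'(0) = -\esp{X_1} < 0$, hence $\psi(\theta) \leq 1$ for all sufficiently small $\theta > 0$. Then $M_j := e^{-\theta Y_j^-}$ is a nonnegative supermartingale, and optional stopping at $\tau := \inf\set{j : Y_j^- \leq 0}$, combined with $M_\tau \geq 1$ on $\set{\tau<\infty}$, yields $\prob{\tau<\infty \mid Y_0^- = y_0} \leq e^{-\theta y_0} \leq e^{-\theta} < 1$. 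Since $Z_0(\ds) \geq k-1 \geq 1$ with probability bounded away from $0$ under the hypotheses on $\ds$ (it suffices that the initial random edge have an endpoint in a bin of size exactly $k$, which occurs with positive probability), the unconditional survival probability of $(Y_j^-)$ is bounded away from $0$ as well.

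I do not expect a deep obstacle: the increments are bounded and the drift is a fixed positive constant, which is precisely the setting where both the supermartingale estimate and the Hoeffding bound work cleanly. The two points that merit explicit care are (a) verifying that the Hoeffding rate $c\eps^{2}$ is genuinely bounded away from $0$ as $n\to\infty$, which holds because $\eps$ is fixed and the support of $X_i$ has width at most $k$; and (b) handling the role of the possibly random starting value $Y_0^- = Z_0(\ds)$ in part~(i), which is dealt with by conditioning on $Z_0(\ds) \geq 1$.
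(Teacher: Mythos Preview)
Your proposal is correct and essentially matches the paper's approach: the paper cites Feller for the positive-drift survival claim in part~(i) and McDiarmid's bounded-differences inequality for part~(ii), which for independent bounded increments is precisely the Hoeffding bound you apply together with a union bound. Your exponential-supermartingale argument is a standard way to establish the Feller fact, and your explicit treatment of the initial value $Y_0^- = Z_0(\ds)$ is in fact more careful than the paper's terse proof.
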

\begin{proof}
  The first part follows from the fact that $(Y_j^-)_{j\geq 0}$ is a
  random walk in $\setR$ with positive expected drift (see
  e.g.~\cite[p.~366]{Feller1}). The second part is a straightforward
  application of the method of bounded differences since the variables
  $Z_j^-$ are independent random variables with range
  $[-1,k-1]$ (see~\cite{McDiarmid89}). 
 \end{proof}


\section{The case $c > c_k'+\omega(n^{-1/4})$}

Here we prove Theorem~\ref{thm:main}(iii). We start by proving a version
of Theorem~\ref{thm:main}(iii) for random multigraphs with given
degree sequence.
\begin{thm}
  \label{thm:c_greater_ck_prime_degrees}
  Let $\psi(n) = \omega(n^{-1/4})$ be a positive function and let
  $C_0$ be a constant.  Suppose that $m = m(n)$ is such that $c =
  2m/n$ satisfies $c_k' + \psi(n)\leq c \leq C_0$. Let $\ds
  \in\Dknm$ be such that $|D_k(\ds) - \esp{D_k(\Ys)}| \leq n \phi(n)$
  for $\phi(n) = o(\psi(n))$, where $\Ys = (Y_1,\dotsc, Y_n)$ and the
  $Y_i$'s are independent truncated Poisson variables with parameters
  $(k, \lambda_{k,c})$. For every $h(n) = \omega(\psi(n)^{-1})$, we have
  that $\prob{W(\multi(\ds)) \geq h(n)} = o(1)$.
\end{thm}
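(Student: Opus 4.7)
The plan is to dominate the deletion procedure by the random walk $(Y_j^+)_{j\geq 0}$ of Section 3 with a carefully chosen parameter $\xi = o(\psi(n))$, and then to control the extinction time of $Y^+$ by a first-moment (Markov) estimate on the total progeny of the associated subcritical branching process. Since the event $\{W \geq h(n)\}$ is monotone in $h(n)$, I may assume $h(n) = o(n\psi(n))$; this is compatible with $h(n) = \omega(\psi(n)^{-1})$ precisely because $\psi(n) = \omega(n^{-1/4})$. Set $T := h(n)$ and $\xi := C(\phi(n) + T/n)$ for a sufficiently large constant $C$; then $\xi = o(\psi(n))$.

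For the coupling, I observe that in iteration $j$ of the deletion procedure, $p_j(\ds)$ is at most the ratio of the number of undeleted points lying in bins of current size $k$ over the number of undeleted points minus one. Each deletion changes the numerator by at most $k$ (a size-$(k{+}1)$ bin becoming size $k$ adds $k$; a size-$k$ bin becoming size $k{-}1$ removes $k$), and at most $2(T{+}1)$ points have been deleted after $T$ iterations, so
\[
p_j(\ds) \leq \frac{k\, D_k(\ds) + O(kT)}{2m - O(T)} \qquad \text{for every } j \leq T.
\]
Using $c = \lambda_{k,c} f_{k-1}(\lambda_{k,c})/f_k(\lambda_{k,c})$ and $\prob{Y_1 = k} = \lambda_{k,c}^k/(k!\,f_k(\lambda_{k,c}))$, a direct computation gives $k\,\esp{D_k(\Ys)}/(2m) = q_{k,c}$, and combining this identity with the hypothesis $|D_k(\ds) - \esp{D_k(\Ys)}| \leq n\phi(n)$ turns the previous inequality into $p_j(\ds) \leq q_{k,c} + \xi$, deterministically, for all $j \leq T$. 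The obvious monotone coupling then produces $Z_j(\ds) \leq Z_j^+$ pointwise, so $Y_j(\ds) \leq Y_j^+$ for every $0 \leq j \leq T$.

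By Proposition~\ref{prop:mean}(iii) combined with $\xi = o(\psi(n))$, the drift of $Y^+$ satisfies $\esp{Z^+ - 1} \leq -\beta\psi(n) + (k{-}1)\xi \leq -(\beta/2)\psi(n) =: -\mu$ for all $n$ large. Viewing $(Y_j^+)$ as a subcritical branching process in which every iteration consumes exactly one particle (which produces $k-1$ offspring with probability $q_{k,c}+\xi$), the extinction time $\tau^+ := \min\{j : Y_j^+ = 0\}$ coincides with the total progeny; optional stopping applied to the supermartingale $Y_j^+ + \mu j$ gives $\esp{\tau^+} \leq Y_0^+/\mu = O(\psi(n)^{-1})$, since $Y_0^+ \leq 2(k-1)$. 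Markov's inequality then yields $\prob{\tau^+ > T} \leq \esp{\tau^+}/T = O\!\left(1/(\psi(n) h(n))\right) = o(1)$, and since $W(\multi(\ds)) = \sum_j \lceil Z_j(\ds)/(k{-}1)\rceil$ is bounded by the number of iterations of the deletion procedure, which under the coupling is dominated by $\tau^+$, the theorem follows.

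The delicate point is the deterministic bound $p_j(\ds) \leq q_{k,c} + \xi$ in the coupling step: the slack $\xi$ must stay strictly below $\psi(n)$ throughout all $T$ iterations in order to keep the majorizing walk subcritical on the scale $\psi(n)$. This is precisely what forces the two quantitative assumptions of the theorem — $\phi(n) = o(\psi(n))$ to absorb the initial fluctuation in $D_k(\ds)$, and $\psi(n) = \omega(n^{-1/4})$ which, together with the monotone reduction $h(n) = o(n\psi(n))$, ensures that the dynamic error $T/n$ is also $o(\psi(n))$.
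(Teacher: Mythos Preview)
Your proof is correct and follows the paper's strategy: bound $p_j(\ds) \leq q_{k,c} + \xi$ with $\xi = o(\psi(n))$ for the first $T$ iterations, couple $Y_j(\ds) \leq Y_j^+$, and invoke Proposition~\ref{prop:mean}(iii) to make the dominating walk subcritical with drift $-\Theta(\psi(n))$. The only real difference is in the extinction estimate at the end. The paper fixes a horizon $t(n) = \psi(n)^{-1}n^{\alpha}$ with $\alpha\in(0,1/2)$, asserts $\esp{Y_{t(n)}^+} = O\bigl((1-\beta'\psi(n))^{t(n)}\bigr) \to 0$, and concludes via Markov on $Y_{t(n)}^+$; you instead bound the hitting time directly by optional stopping, $\esp{\tau^+} \leq Y_0^+/\mu = O(\psi(n)^{-1})$, and then apply Markov to $\tau^+$ with $T = h(n)$, after the monotone reduction to $h(n) = o(n\psi(n))$ (which, as you note, uses $\psi(n) = \omega(n^{-1/4})$ exactly). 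Your endgame is arguably cleaner: it works directly with the exploration walk as defined, whereas the paper's exponential bound on $\esp{Y_t^+}$ tacitly treats $Y^+$ as a Galton--Watson process indexed by generations rather than by exploration steps. Either route yields the conclusion; yours trades an exponential tail for a first-moment bound that is already sharp enough.
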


Using Theorem~\ref{thm:c_greater_ck_prime_degrees}, we can deduce a
result about multigraphs with given number of vertices and edges,
which is then used to prove Theorem~\ref{thm:main}(iii).
\begin{cor}
  \label{cor:coupling_multi}
  Let $\psi(n) = \omega(n^{-1/4})$ be a positive function and let
  $C_0$ be a constant.  Suppose that $c = 2m/n$ is such that $c_k' +
  \psi(n)\leq c \leq C_0$.  For every $h(n) = \omega(\psi(n)^{-1})$,
  we have that $\prob{W(\multiknm) \geq h(n)} = o(1)$.
\end{cor}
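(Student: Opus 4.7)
The plan is to condition on the degree sequence of $\multiknm$ and invoke Theorem~\ref{thm:c_greater_ck_prime_degrees}. Conditional on $\ds(\multiknm)=\ds$, the multigraph $\multiknm$ has the same distribution as $\multi(\ds)$, so for any subset $G_n\subseteq\Dknm$,
\begin{equation*}
  \prob{W(\multiknm) \ge h(n)}
  \le
  \prob{\ds(\multiknm) \notin G_n}
  + \sup_{\ds \in G_n} \prob{W(\multi(\ds)) \ge h(n)}.
\end{equation*}
Fixing some $\phi(n)=o(\psi(n))$ (to be specified below) and letting $G_n$ be the set of $\ds\in\Dknm$ satisfying $|D_k(\ds)-\esp{D_k(\Ys)}|\le n\phi(n)$, Theorem~\ref{thm:c_greater_ck_prime_degrees} forces the supremum to be $o(1)$. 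It then suffices to prove $\prob{\ds(\multiknm)\notin G_n}=o(1)$.

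For this I would use that $\ds(\multiknm)$ has the same distribution as $\Ys$ conditioned on $\Sigma=\set{\sum_{i=1}^n Y_i = 2m}$, as recalled after the definition of $\multiknm$. Because $\lambda_{k,c}$ is defined so that $\lambda_{k,c}f_{k-1}(\lambda_{k,c})/f_k(\lambda_{k,c})=c$, a short computation gives $\esp{Y_i}=c$, so $\sum_iY_i$ has mean exactly $2m$. Since $\lambda_{k,c}$ is smooth in $c$ on the compact range $[c_k',C_0]$ (as already used in the proof of Proposition~\ref{prop:mean}), the variance of $\tpoisson{k}{\lambda_{k,c}}$ stays bounded away from $0$ and $\infty$ uniformly in $c$; a standard local central limit theorem for sums of i.i.d.\ aperiodic integer variables then gives $\prob{\Sigma}=\Theta(n^{-1/2})$. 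Consequently
\begin{equation*}
  \prob{\ds(\multiknm)\notin G_n}
  \le \frac{\prob{|D_k(\Ys)-\esp{D_k(\Ys)}|>n\phi(n)}}{\prob{\Sigma}}
  = O\paren[\big]{\sqrt{n}}\cdot \prob{|D_k(\Ys)-\esp{D_k(\Ys)}|>n\phi(n)}.
\end{equation*}

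Now $D_k(\Ys)=\sum_{i=1}^n\mathbf{1}[Y_i=k]$ is a sum of $n$ independent Bernoulli variables, so Hoeffding's inequality yields $\prob{|D_k(\Ys)-\esp{D_k(\Ys)}|>n\phi(n)}\le 2e^{-2n\phi(n)^2}$. Choosing $\phi(n)=n^{-1/2}\log n$, which is $o(\psi(n))$ since $\psi(n)=\omega(n^{-1/4})$, makes the previous bound $2e^{-2\log^2 n}=o(n^{-1/2})$, so $\prob{\ds(\multiknm)\notin G_n}=o(1)$ and the reduction is complete.

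The only nontrivial point is the uniformity in the local CLT: one needs $\prob{\Sigma}\ge c'/\sqrt{n}$ for a constant $c'>0$ independent of $c\in[c_k'+\psi(n),C_0]$. This follows from the smoothness and boundedness of $c\mapsto\lambda_{k,c}$ on this range, which keeps the truncated Poisson distributions $\tpoisson{k}{\lambda_{k,c}}$ in a uniformly non-degenerate compact family, so classical local limit estimates apply with uniform constants.
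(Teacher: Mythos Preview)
Your proposal is correct and follows essentially the same route as the paper: condition on the degree sequence, show typical sequences lie in the ``good'' set $G_n$ (the paper's $\tDknm$) via a concentration bound on $D_k(\Ys)$ combined with the $\Omega(n^{-1/2})$ lower bound on $\prob{\Sigma}$, and then apply Theorem~\ref{thm:c_greater_ck_prime_degrees}. The only cosmetic differences are that the paper uses Chebyshev rather than Hoeffding (hence needs $\phi(n)=\omega(n^{-1/4})$ rather than your smaller $\phi(n)=n^{-1/2}\log n$), cites \cite{PittelWormald03} for the local-limit bound on $\prob{\Sigma}$ instead of sketching it, and spells out the uniformity of the $o(1)$ in the supremum by explicitly picking a maximising sequence $\ds^*(n)\in\tDknm$ and applying the theorem to it---a step you pass over but which is the standard one-line justification.
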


Now we prove Theorem~\ref{thm:c_greater_ck_prime_degrees}.  We will
choose $\xi$ big enough so that $Z_j(\ds)$ is stochastically bounded
from above by $Z_j^+$ for $j\leq t(n)$ steps, where $Z_j^+$ has the same distribution as $Z^+(k,c, \xi)$. Recall we start the
deletion process with $n$ bins with $d_i$ points inside each bin
$i$. Let $p$ denote the initial ratio between the number of points in
bins of size $k$ and the total number of points.  Note that $p =
kD_k(\ds)/2m = q_{k,c}(1+\phi_1(n))$, for some function $\phi_1(n)$
such that $\phi_1(n) = O(\phi(n))$. Choose $t(n) = \psi(n)^{-1}
n^\alpha$, where $\alpha$ is constant in $(0,1/2)$. Then, for $1\leq
j\leq t(n)$,
\begin{equation*} 
  \frac{kD_k(\ds)-(j+2)(k-1)}{2m-2j-2} \leq p_j(\ds)\leq\frac{kD_k(\ds)}{2m-2j-2}
\end{equation*}
and so $p_j(\ds) = p + O(t(n)/n)$.  We can assume $h(n) \leq
t(n)$. Proposition~\ref{prop:mean} implies that $q_{k,c}\leq 1/(k-1)$.
Since $t(n)/n = o(\psi(n))$, we can choose $\xi > 0$ such that $\xi =
o(\psi(n))$ and $\xi < 1-q_{k,c}$ and $Z^+_j \geq Z_j$ for all $j\leq
t$. Now $\esp{Z^+(k,c,\xi)} \leq 1 - \beta\psi(n) + (k-1)\xi$ according
to Proposition~\ref{prop:mean} for some positive constant
$\beta$. Since $\xi = o(\psi)$, we have $\esp{Z^+(k,c,\xi)} \leq 1 -
\beta'\psi(n)$ for some positive constant $\beta'$. Thus, we have that
$\esp{Y_{t}^+} = O((1-\beta'\psi(n))^{t(n)}) =
O(\exp(-t(n)\beta'\psi(n) )) = o(1)$ because $t(n) =
n^{\alpha}/\psi(n)$ with $\alpha > 0$.  This implies that the deletion
procedure stops before $t(n)$ steps \aas, which proves
Lemma~\ref{thm:c_greater_ck_prime_degrees}.

\subsection{Proof of Corollary~\ref{cor:coupling_multi} and
  Theorem~\ref{thm:main}(iii)}
\label{sec:proofcor_super}
Let $h(n)=\omega(\psi(n)^{-1})$. Choose $\phi(n)$ such that $\phi(n) =
o(\psi(n))$ and $\phi(n) = \omega(n^{-1/4})$. First we will prove
Corollary~\ref{cor:coupling_multi}. We will show that the degree
sequences that satisfy the hypotheses in
Lemma~\ref{thm:c_greater_ck_prime_degrees} are the `typical' degree
sequences for $\multiknm$. Let $\tDknm$ be the set of degree sequences
$\ds$ satisfying $|D_k(\ds) - \esp{D_k(\Ys)}|\leq n\phi(n)$. Recall that
$\ds(\multiknm)$ has the same distribution as $\Ys = (Y_1,\dotsc,
Y_n)$ such the $Y_i$'s are independent truncated Poisson variables
with parameters $(k, \lambda_{k,c})$ and conditioned to the event
$\Sigma$ that $\sum_i Y_i = 2m$. Using Chebyshev's inequality,
\begin{equation*}
  \prob{|D_k(\Ys) - \esp{D_k(\Ys)}| \geq \phi(n) n}
  \leq
  \frac{n}{n^2\phi(n)^2}.
\end{equation*}
By~\cite[Theorem~4(a)]{PittelWormald03}, it is easy to see that the
probability of $\Sigma$ is $\Omega(1/\sqrt{n})$. Thus,
\begin{equation}
  \label{eq:probtypicaldegree}
  \prob{\ds(\multiknm) \not\in \tDknm}
  \leq
  \frac{\prob{\Ys \not\in\tDknm}}{\prob{\Sigma}}
  =
  O\left(\frac{n\sqrt{n}}{n^2 \phi(n)^2}\right)
  =o(1).
\end{equation}

For every $n\in\setN$, since the set $\tDknm$ is finite, there exists
a degree sequence $\ds^*(n)$ such that
$\prob[\big]{W(\multi(\ds^*(n))) \geq h(n)}=
\max\set[\big]{\prob[\big]{W(\multi(\ds)) \geq h(n)}: \ds \in
  \tDknm}$.  Set $r(n) =\prob[\big]{W(\multi(\ds^*(n))) \geq
  h(n)}$. Theorem~\ref{thm:c_greater_ck_prime_degrees} implies that
$r(n) = o(1)$. Thus, for any sequence $(\ds(n))_{n\in\setN}$ such that
$\ds(n)\in\tDknm$ for every $n\in\setN$, we have that
$\prob{W(\multi(\ds(n))) \geq h(n)}\leq r(n) = o(1)$. This is usually
expressed by saying that $\prob{W(\multi(\ds(n))) \geq h(n)}\to 0$
uniformly for $\ds\in\tDknm$. Together
with~\eqref{eq:probtypicaldegree}, this implies that
$\prob{W(\multiknm)\geq h(n)} = o(1)$, proving
Corollary~\ref{cor:coupling_multi}.

We will now prove Theorem~\ref{thm:main}(iii).  To deduce the result for
simple graphs, we impose further conditions on the degree sequences:
let $\hDknm$ be the set of degree sequences in $\tDknm$ that satisfy
the conditions that $\max_i d_i \leq n^{\eps}$ for some $\eps
\in(0,0.25)$ and that $|\eta(\ds) - \esp{\eta(\Ys)}| \leq
\phi(n)$. One can easily prove that $\var{Y_i(Y_i-1)} = O(1)$ and so
uniformly for $n$ and~$m$ with $c < C_0$, by Chebyshev's inequality,
\begin{equation*}
\begin{split}
\prob[\Big]{|\eta(\Ys) - \esp{\eta(\Ys)}|
\geq \phi(n)}
=
O\left(
\frac{1}{n\phi(n)^2}
\right).
\end{split}
\end{equation*}
For $j_0 > 2e\lambda_{k,c}$, we have that $\prob{Y_1 > j_0} =
O(\exp(-j_0/2))$. This holds because the ratio
$\prob{Y_1=j+1}/\prob{Y_1=j}$ is less than $1/e$ for $j \geq j_0/2$
(This is the same equation as~\cite[Equation
(27)]{PittelWormald03}). Thus, $\prob{\max_j Y_j \geq n^{\eps}} =
O(n\exp(- n^{\eps}/2))$. This implies that $\prob{\ds(\multiknm)\in
  \hDknm}$ is also $1+o(1)$.  For $\ds\in\hDknm$, the probability of
that $\multi(\ds)$ is simple is already known (see~\cite{McKay85,
  McKayWormald91}):
\begin{equation*}
  \begin{split}
    \prob{\multi(\ds)\text{ simple}}
    &=
    \exp
    \left(
      -\frac{\eta(\ds)}{2}
      -\frac{\eta(\ds)^2}{4}
      +
      O\left(\frac{\max_i d_i^4}{n}\right)
    \right)
    \\
    &\sim
    \exp
    \left(
      -\frac{\bar\eta_{c}}{2}
      -\frac{(\bar\eta_{c})^2}{4}
      +
      O\left(\frac{\max_i d_i^4}{n}\right)
    \right),
  \end{split}
\end{equation*}
where $\bar \eta_c := \lambda_{k,c} f_{k-2}(\lambda_{k,c})/
f_{k-1}(\lambda_{k,c})$. We can apply the same argument on the
uniformity of the bound for $\prob{W(\multi(\ds))\geq h(n)} = o(1)$ as
above to $\prob{\multi(\ds)\text{ is simple}} - \exp\left(-\eta_{c}/2
  -\eta_{c}^2/4\right)$ and conclude that $$\prob{\multiknm \text{ is
    simple}} = \exp\left(-\eta_{c}/2 -\eta_{c}^2/4\right) +o(1) =
\Omega(1)$$ and so
\begin{equation*}
  \begin{split}
    &\prob[\Big]{W(\simpleknm)\geq h(n)}
    \\
    &=
    \probcond[\Big]{W(\multiknm)\geq h(n)}{\multiknm\text{ is simple}}
    \\
    &\leq
    \frac{\prob{W(\multiknm)\geq h(n)}}
    {\prob{\multiknm\text{ is simple}}}
    \\
    &= o(1).
  \end{split}
\end{equation*}
This finishes the proof of Theorem~\ref{thm:main}(iii).

\subsection{The $k$-core of $G(n,m)$}
In this section we prove Corollary~\ref{cor:Gnm}. We will
use~\cite[Theorem 2]{PittelSpencerWormald96}. Although this result
does not state the number of edges in the $k$-core, it can be obtained
from its proof with the main steps in~\cite[Equations
(6.18),(6.34)]{PittelSpencerWormald96} and~\cite[Corollary
1]{PittelSpencerWormald96} applied to $J_1$. We
restate~\cite[Theorem~2]{PittelSpencerWormald96} with the number of
edges here:
\begin{thm}[\protect{\cite[Theorem~2]{PittelSpencerWormald96}}]
  Suppose $c > c_k + n^{-\delta}$, $\delta\in (0,1/2)$ being
  fixed. Fix $\sigma \in (3/4,1-\delta/2)$ and $\bar \zeta =
  \min\set{2\sigma -3/2, 1/6}$. Then with probability $\geq 1 +
  O(\exp(-n^\zeta))$ ($\forall \zeta < \bar\zeta$), the random graph
  $G(n,m=cn/2)$ contains a giant $k$-core with $e^{-\mu_{k,c}}
  f_k(mu_{k,c}) n + O(n^{\sigma})$ vertices and $(1/2) \mu_{k,c}
  e^{-\mu_{k,c}} f_{k-1}(\mu_{k,c})n + O(n^{\sigma})$ edges.
\end{thm}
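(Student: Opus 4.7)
The plan is to piggy-back on the proof of Theorem~2 of \cite{PittelSpencerWormald96} and extract the edge count as a second observable that is already tracked in their argument. Their original proof analyses the vertex-deletion process on $G(n,m)$ that iteratively removes vertices of degree less than $k$, and it tracks a collection of quantities $J_r$ that together describe the state of the evolving degree sequence. Their Equations~(6.18) and~(6.34) give the expected trajectories of these observables, and their Corollary~1 supplies a concentration inequality which controls the deviation of the actual trajectories from the expected ones by $O(n^\sigma)$ with probability $1-O(\exp(-n^\zeta))$ for every $\zeta<\bar\zeta$.

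The first step is to observe that, at the stopping time of the deletion process, the surviving subgraph is the $k$-core, and twice its number of edges is captured by the observable $J_1$ (which, in the notation of \cite{PittelSpencerWormald96}, encodes the total number of surviving half-edges). Thus, once one has concentration for $J_1$ at the stopping time, it remains only to compute its expectation. In the scaling limit the degrees of the surviving vertices are asymptotically independent truncated Poisson variables with parameter $\mu_{k,c}$, so the expected sum of surviving degrees is $n\,\mu_{k,c}\,e^{-\mu_{k,c}}f_{k-1}(\mu_{k,c})$, and halving gives the claimed edge count $(1/2)\mu_{k,c}\,e^{-\mu_{k,c}}f_{k-1}(\mu_{k,c})\,n$.

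The second step is to apply Corollary~1 of \cite{PittelSpencerWormald96} to $J_1$ exactly as it is applied in their proof of Theorem~2 to the observable that yields the vertex count. Because each deletion step modifies $J_1$ by a bounded amount (namely the degree of the deleted vertex together with the adjustments to its surviving neighbours), the bounded-differences martingale argument used there goes through unchanged and produces the $O(n^\sigma)$ error term with the stated tail probability. The main obstacle is purely bookkeeping: verifying that the Lipschitz constants and the stopping-time analysis of \cite{PittelSpencerWormald96} survive this substitution. Since their framework is already set up to handle all the $J_r$'s uniformly in~$r$, no new ideas are required, and the desired estimate for the number of edges follows.
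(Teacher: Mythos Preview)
Your proposal is correct and is essentially identical to what the paper does: the paper does not give a proof either, but simply notes that the edge count can be extracted from the proof in \cite{PittelSpencerWormald96} by applying their Equations~(6.18) and~(6.34) and their Corollary~1 to the observable~$J_1$, which is precisely the plan you describe. Your write-up adds a bit more detail (identifying $J_1$ as the surviving half-edge count and checking the bounded-difference condition), but the substance matches the paper's sketch exactly.
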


We are now ready to prove Corollary~\ref{cor:Gnm}. Recall that $c
\geq c_k + n^{-\delta}$, where $\delta \in (0, 1/4)$. So $\delta =
1/4-\eps$, where $\eps$ is a constant in $(0,1/4)$.  Let $\eps' <
\eps$ be a constant such that $\eps'< 1/4-\delta/2$. Fix $\sigma =
3/4+\eps'$. Thus, the average degree of the $k$-core is
\begin{equation*}
  \frac{\mu_{k,c}
    f_{k-1}(\mu_{k,c})}{f_k(\mu_{k,c})}(1+O(n^{-1/4+\eps'}).
\end{equation*}
Recall that $h'(\mu_{k,c_k})=0$ and $h'(\mu) > 0$ for $\mu
>\mu_{k,c_k}$. This implies that $\mu_{k, c} = \mu_{k, c_k} +
\Omega(c-c_k)$. Moreover, the function $x\mapsto xf_{k-1}(x)/ f_k(x)$
is smooth. Thus, the average degree of the $k$-core of $G(n,m)$ is
$(c_k' + \Theta(c-c_k))(1+O(n^{-1/4+\eps'}))$. Since $c-c_k' >
n^{-\delta} = n^{-1/4+\eps}$ with $\eps > \eps'$, the average degree
of the $k$-core is $c_k'+\Omega(c-c_k)$. We can now apply
Theorem~\ref{thm:main}(iii) to obtain the desired result.


\section{The case $k\leq c \leq c_k'-\eps$: deleting $\Theta(n)$
  vertices}
\label{sec:inter}
The following result is an intermediate step for the proof of
Theorem~\ref{thm:main}(i) and (ii) .
\begin{thm}
  \label{thm:inter_degrees}
  Let $\eps > 0$ be a fixed real.  Suppose that $k\leq c \leq
  c_k'-\eps$.  Let $\phi(n) = o(1)$. Let $\ds$ be such that $D_k(\ds)
  \geq \esp{D_k(\Ys)}(1 - \phi(n))$, where $\Ys = (Y_1,\dotsc, Y_n)$
  and the $Y_i$'s are independent truncated Poisson variables with parameters
  $(k, \lambda_{k,c})$. Then there exists a constant $\eps' > 0$
  (depending on $\eps$) such that, for every function $h(n)\to
  \infty$, we have that \aas\ $W(\multi(\ds)) \leq h(n)$ or $W(\multi(\ds))
  \geq \eps' n$. Moreover, $W(\multi(\ds)) \geq \eps' n$ with
  probability bounded away from zero.
\end{thm}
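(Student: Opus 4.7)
My plan is to couple the deletion procedure with the lower random walk $(Y_j^-)_{j\geq 0}$ for an appropriate choice of $\xi > 0$, then invoke Proposition~\ref{prop:meangreaterthan1} to split the behaviour into a ``short death'' and a ``long survival'' regime. Since $c \leq c_k'-\eps$, Proposition~\ref{prop:mean}(ii) gives a constant $\alpha>0$ with $(k-1)q_{k,c} = \esp{Z(k,c)} \geq 1+\alpha$. I would take $\xi$ to be any small positive constant with $\xi \leq \alpha/(4k)$, so that $\esp{Z^-(k,c,\xi)} \geq 1+\alpha/2$ and Proposition~\ref{prop:meangreaterthan1} applies; then fix a small constant $\gamma>0$ compatible with this $\xi$, to play the role of the horizon of the coupling.

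The first technical step is to couple, for $j \leq \gamma n$ and while the procedure is alive, the step variable $Z_j(\ds)$ with an independent copy $Z_j^-$ of $Z^-(k,c,\xi)$ so that $Z_j(\ds) \geq Z_j^-$. By standard stochastic domination this is possible as long as $p_j(\ds) \geq q_{k,c}-\xi$ and $p_j'(\ds) \leq \xi$, and I would verify both bounds deterministically on $\{j \leq \gamma n\}$: the number of unmarked bins of current size exactly $k$ changes by at most one per iteration, so combined with the hypothesis $D_k(\ds) \geq \esp{D_k(\Ys)}(1-\phi(n))$ and the identity $\esp{kD_k(\Ys)} = 2m q_{k,c}$, one obtains $p_j(\ds) \geq q_{k,c} - O(\gamma) - \phi(n)$; and the trivial bound $Y_j(\ds) \leq Y_0(\ds) + (k-2)j$ gives $p_j'(\ds) = O(\gamma)$. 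Taking $\gamma$ sufficiently small (and using $\phi(n) = o(1)$), the coupling yields $Y_j(\ds) \geq Y_j^-$ for all $j \leq \min(\gamma n, T)$, where $T$ is the death time of the procedure.

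Let $h'(n) := h(n)-2$, which still tends to infinity. Since each iteration of the loop contributes at most $1$ to $W$ and iteration~$0$ contributes at most $2$, the bound $T \leq h'(n)$ already implies $W(\multi(\ds)) \leq h(n)$. On the event $\{h'(n) < T \leq \gamma n\}$, the coupling gives $Y_T^- \leq Y_T(\ds) = 0$, so the event $\set{Y_j^- > 0\ \forall j \geq h'(n)}$ fails; Proposition~\ref{prop:meangreaterthan1} makes this have probability $o(1)$. On $\{T > \gamma n\}$ the coupling yields $W(\multi(\ds)) \geq \card{\set{j \leq \gamma n \st Z_j^- = k-1}}$, which is a $\mathrm{Binomial}(\gamma n, q_{k,c}-\xi)$ variable, and a Chernoff bound gives $W(\multi(\ds)) \geq (q_{k,c}-\xi)\gamma n/2 =: \eps' n$ with probability $1-o(1)$. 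Combining these three cases proves the dichotomy.

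For the positive-probability claim, note that $\esp{D_k(\Ys)} = \Theta(n)$, so the hypothesis forces $D_k(\ds) = \Theta(n)$, whence $\prob{Z_0(\ds) > 0}$ is bounded away from zero and $Y_0^- > 0$ with positive probability. Proposition~\ref{prop:meangreaterthan1} then gives $\prob{Y_j^- > 0\ \forall j \geq 0}$ bounded away from zero, and on this event the deletion procedure survives past $\gamma n$ steps; the Chernoff argument above yields $W(\multi(\ds)) \geq \eps' n$ with probability bounded away from zero. The main obstacle is the coupling step: verifying that the deterministic bounds $p_j(\ds) \geq q_{k,c}-\xi$ and $p_j'(\ds) \leq \xi$ can be enforced uniformly over the linear horizon $j \leq \gamma n$, for which the hypothesis on $D_k(\ds)$ is precisely what is needed.
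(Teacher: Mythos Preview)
Your proposal is correct and follows essentially the same route as the paper: couple $Z_j(\ds)$ from below with $Z^-(k,c,\xi)$ over a linear horizon $j\leq \gamma n$ by checking $p_j(\ds)\geq q_{k,c}-\xi$ and $p_j'(\ds)\leq \xi$, then invoke Proposition~\ref{prop:meangreaterthan1} to get the dichotomy and the positive-probability statement. The paper's only simplification is that on $\{T>\gamma n\}$ it bounds $W$ directly by $T/(k-1)\geq \gamma n/(k-1)$ (each deleted vertex accounts for at most $k-1$ loop iterations), so your Chernoff step is valid but unnecessary.
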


The proof of the following corollary is very similar to the proof
in Section~\ref{sec:proofcor_super} and so we omit it.
\begin{cor}
  \label{cor:multi_inter}
  Let $\eps > 0$ be a fixed real.  Suppose that $k\leq c \leq
  c_k'-\eps$. Then there exists a constant $\eps' > 0$ (depending on
  $\eps$) such that, for every function $h(n)\to \infty$, we have that
  \aas\ $W(\multiknm) \leq h(n)$ or $W(\multiknm) \geq \eps' n$.
  Moreover, $W(\multiknm)) \geq \eps' n$ with probability bounded away
  from zero.
\end{cor}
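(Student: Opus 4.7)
The plan is to follow the template used in Section~\ref{sec:proofcor_super} verbatim, merely replacing the one-sided upper-tail statement ``$W(\multi(\ds))\geq h(n)$ is small'' by the two-sided dichotomy plus lower bound that Theorem~\ref{thm:inter_degrees} furnishes. The overall idea is: transfer the conclusion from the fixed-degree-sequence multigraph $\multi(\ds)$ to the uniform model $\multiknm$ via conditioning on the degree sequence, using that $\ds(\multiknm)$ lies in a suitable ``typical'' set with probability $1-o(1)$.

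More concretely, I would choose a function $\phi(n)=o(1)$ with $\phi(n)=\omega(n^{-1/4})$ and define a typical set $\tDknm$ of degree sequences $\ds\in\Dknm$ satisfying $D_k(\ds)\geq \esp{D_k(\Ys)}(1-\phi(n))$, where $\Ys=(Y_1,\dotsc,Y_n)$ is a vector of independent truncated Poisson variables with parameters $(k,\lambda_{k,c})$. Chebyshev's inequality applied to $D_k(\Ys)=\sum_i\mathbf{1}[Y_i=k]$ gives $\prob{|D_k(\Ys)-\esp{D_k(\Ys)}|\geq \esp{D_k(\Ys)}\phi(n)} = O(1/(n\phi(n)^2))$; combined with the bound $\prob{\Sigma}=\Omega(1/\sqrt n)$ from~\cite[Theorem~4(a)]{PittelWormald03} exactly as in~\eqref{eq:probtypicaldegree}, this yields $\prob{\ds(\multiknm)\notin\tDknm}=O(1/(\sqrt n\,\phi(n)^2))=o(1)$.

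With typicality in place, I would then invoke Theorem~\ref{thm:inter_degrees} uniformly. For the dichotomy, define $r(n)=\max\{\prob{h(n) < W(\multi(\ds)) < \eps' n}:\ds\in\tDknm\}$; Theorem~\ref{thm:inter_degrees} yields $r(n)=o(1)$, so
\begin{equation*}
\prob{h(n) < W(\multiknm) < \eps' n}
\leq r(n) + \prob{\ds(\multiknm)\notin\tDknm}
= o(1),
\end{equation*}
which is the desired dichotomy. For the ``bounded away from zero'' claim, set $s(n)=\min\{\prob{W(\multi(\ds))\geq \eps' n}:\ds\in\tDknm\}$; by Theorem~\ref{thm:inter_degrees} there is a constant $\kappa>0$ with $s(n)\geq \kappa$, and therefore $\prob{W(\multiknm)\geq \eps' n}\geq s(n)\,\prob{\ds(\multiknm)\in\tDknm}\geq \kappa(1-o(1))$, which is bounded away from zero.

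The main subtlety — the step I would check most carefully — is the \emph{uniformity} of both the $o(1)$ bound and the positive lower bound in Theorem~\ref{thm:inter_degrees} over the entire typical set $\tDknm$. Since that theorem is stated for any single $\ds$ whose only constraint is the one-sided control $D_k(\ds)\geq \esp{D_k(\Ys)}(1-\phi(n))$ used to define $\tDknm$, and since the constants $\eps'$ and the lower bound depend only on $\eps$ and $\phi$, the uniformity should be automatic; this is the same mechanism that made the supremum/infimum arguments in Section~\ref{sec:proofcor_super} work, and it is why the author declares the present proof essentially identical and omits it.
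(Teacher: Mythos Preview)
Your proposal is correct and follows exactly the template of Section~\ref{sec:proofcor_super}, which is precisely what the paper intends when it declares the proof ``very similar'' and omits it. The only adaptation you make---replacing the single upper-tail event by the dichotomy event $\{h(n)<W<\eps' n\}$ for the $o(1)$ part, and taking an infimum rather than a supremum for the $\Omega(1)$ part---is the natural one, and your check that the uniformity over $\tDknm$ is automatic (because Theorem~\ref{thm:inter_degrees} is stated for any $\ds$ meeting the one-sided $D_k$ constraint, with $\eps'$ and the implied constants depending only on $\eps$) is exactly the right point to verify.
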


For the case $c\to k$, Theorem~\ref{thm:inter_degrees} implies a
stronger result because there is a function $h(n)\to \infty$ such that
$W(\multiknm)\geq h(n)$ steps \aas\ From this
one can deduce the following result.
\begin{cor}
  \label{cor:goingtok}
  If $c\geq k$ and $c\to k$, then there exists a constant $\eps'> 0$
  such that $W(\multiknm)) \geq \eps' n$ \aas
\end{cor}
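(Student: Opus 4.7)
The plan is to combine Corollary~\ref{cor:multi_inter} with a direct lower bound showing $W(\multiknm)\to\infty$ a.a.s.\ when $c\to k$. Since $c_k'>k$, for $n$ large the hypothesis $k\leq c\leq c_k'-\eps$ of Corollary~\ref{cor:multi_inter} is satisfied with a fixed $\eps\in(0,c_k'-k)$, yielding a constant $\eps'>0$ such that a.a.s.\ either $W(\multiknm)\leq h(n)$ or $W(\multiknm)\geq \eps' n$, for any $h(n)\to\infty$. It therefore suffices to exhibit $h(n)\to\infty$ with $\prob{W(\multiknm)\geq h(n)} = 1-o(1)$, which rules out the small branch.

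The driving observation is that $q_{k,c}\to 1$ as $c\to k$: expanding $f_{k-1}(\lambda)$ and $f_k(\lambda)$ near $\lambda=0$ shows that $c - k = \Theta(\lambda_{k,c})$ and $1-q_{k,c} = \Theta(\lambda_{k,c}) = \Theta(c-k) = o(1)$. So each step of the deletion procedure equals $k-1$ with probability $1-o(1)$, and since $k\geq 3$ the deterministic walk $Y_0 + (k-2)j$ has positive drift. The plan is to choose a slowly-growing $h(n)\to\infty$ satisfying $h(n)(c-k)\to 0$ and $h(n) = o(n^{1/2})$ (for example $h(n) = \min\{(c-k)^{-1/2},n^{1/4}\}$); condition on a typical degree sequence $\ds\in\tDknm$, which occurs with probability $1-o(1)$ by the argument in Section~\ref{sec:proofcor_super}; and use the estimates of Section~\ref{sec:exploration_multi_super} to bound $\prob{Z_j(\ds)\neq k-1} \leq 1 - q_{k,c} + O(\phi(n) + h(n)/n)$ uniformly for $j\leq h(n)$. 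A union bound then yields
\begin{equation*}
\prob[\Big]{\exists\,j\leq h(n)\st Z_j(\ds)\neq k-1} \leq h(n)(1-q_{k,c}) + O(h(n)\phi(n) + h(n)^2/n) = o(1),
\end{equation*}
by the choice of $h(n)$ and $\phi(n)$.

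On this event together with $Z_0(\ds)\geq k-2\geq 1$ (which occurs with probability $1-o(1)$ because a $1-o(1)$ fraction of points lies in size-$k$ bins), we have $Y_j(\ds) \geq Y_0 + (k-2)j > 0$ for every $1\leq j\leq h(n)$, so the loop runs for at least $h(n)$ iterations, each contributing $1$ to $W$. Hence $W(\multi(\ds))\geq h(n)$; averaging over typical $\ds$ gives the required bound, and the dichotomy then forces $W(\multiknm)\geq \eps' n$ a.a.s. The delicate step is the calibration of $h(n)$: the binding constraint is $h(n)(c-k) = o(1)$, which is exactly why $h(n)$ cannot grow faster than $(c-k)^{-1/2}$ and which is achievable with $h(n)\to\infty$ precisely because $c-k\to 0$.
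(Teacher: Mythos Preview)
Your argument is correct and follows the same route the paper sketches: produce some $h(n)\to\infty$ with $W(\multiknm)\geq h(n)$ a.a.s.\ (using that $q_{k,c}\to 1$ as $c\to k$), then invoke the dichotomy of Corollary~\ref{cor:multi_inter}. The paper asserts this in one line and leaves the survival argument implicit; your explicit union bound on $\{Z_j(\ds)\neq k-1\}$ is a clean way to supply it.

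One small calibration issue: the concentration step you borrow from Section~\ref{sec:proofcor_super} forces $\phi(n)=\omega(n^{-1/4})$, so with your example $h(n)=\min\{(c-k)^{-1/2},n^{1/4}\}$ the term $h(n)\phi(n)$ need not be $o(1)$ when the cap $n^{1/4}$ is active. Since you only need \emph{some} $h(n)\to\infty$, capping at $n^{1/8}$ or even $\log n$ (and taking, say, $\phi(n)=n^{-1/5}$) fixes this without changing the argument. Also, the constraint $h(n)(c-k)=o(1)$ only requires $h(n)=o((c-k)^{-1})$, not $o((c-k)^{-1/2})$, so your last sentence overstates the restriction.
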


Now we prove Theorem~\ref{thm:inter_degrees}.  We will choose $\xi$ so
that $Z_j(\ds)$ is stochastically bounded from below by $Z_j^-$, where
$Z_j^-$ has the same distribution as $Z^-(k,c,\xi)$, and so that
$\esp{Z^-(k,c,\xi)}$ is bounded away from $1$ from above. For $j\geq
1$, we have already seen that $p_j(\ds) =
(1+O(j/n)+O(\phi(n)))q_{k,c}$. Moreover, for $j\geq 1$, the
probability that $Z_{j}(\ds)=-1$ is at most $(k-1)(j+2) /(2m-2j-2)$.

Proposition~\ref{prop:mean} implies that $q_{k,c} > 1/(k-1)$ and that
$\esp{Z^-(k,c,\xi)}\geq 1 + \alpha' -(k-1)\xi$ for some constant
$\alpha'>0$. Choose $\xi \in (0,\alpha'/(k-1))$. Thus, we have
$\esp{Z^-(k,c,\xi)}\geq 1 +\alpha$ for some $\alpha > 0$.  We can now
choose $\eps'' > 0$ small enough so that $p_j(\ds) \geq q_{k,c} -\xi$
and $\prob{Z_j(\ds)=-1}\leq \xi$ for all $j\leq \eps''n$. Thus, we can
couple the processes for at least $t(n) = \eps''n$ steps.

By Proposition~\ref{prop:meangreaterthan1}, \aas\ either $Y_j^- \leq
0$ for some $j\leq h(n)$ or $Y_{j}^- > 0$ for all~$j$.  Moreover,
the latter occurs with probability bounded away from zero. Since the
coupling holds for $t(n)$ steps with $Z_j^-\leq Z_{j}(\ds)$, \aas\
either $Y_j(\ds) = 0$ for some $j\leq h(n)$ or $Y_{j}(\ds) > 0$ for
$1\leq j\leq \eps''n$. Thus, \aas\ either $W(\multi(\ds))\leq
h(n)+2$ or $W(\multiknm) \geq \eps''n/(k-1)$. This completes the proof
of Theorem~\ref{thm:inter_degrees}.

\section{The case $k\leq c \leq c_k'-\eps$}
In this section we will prove Theorem~\ref{thm:main}(i). We will also
prove Theorem~\ref{thm:main}(ii) except for the claim that
$W(\simpleknm) = n$ with probability bounded away from zero, which is
handled in Section~\ref{sec:simple}. We use
the differential equation method as described in~\cite[Theorem
6.1]{Wormald99} with stopping times. We will also use some results
from~\cite{CainWormald06}.

We will use the pairing-allocation model $\Pcal(M,L, V, k)$ as
described in~\cite{CainWormald06}: given a set $M$ of points together
with a perfect matching $E_M$ on $M$ and two disjoint set $L, V$ let
$h$ be chosen uniformly at random from the functions mapping $M$ to
$L\cup V$ such that $|h^{-1}(v)| \geq k$ for all $v\in V$ and
$|h^{-1}(v)|=1$ for all $v\in L$. Let
$G_{\Pcal} = G_{\Pcal}(M,L,V,k)$ be the multigraph obtained by adding
edges joining $h(a)$ and $h(b)$ for every $ab\in E_M$ and $h(a),
h(b)\in V$. Note that $\multiknm = G_{\Pcal}([2m],\varnothing, [n],k)$
with $E_M = \set{\set{i, m+i}: i\in[m]}$.

We say that the vertices in $V$ are heavy vertices and the vertices in
$L$ are light vertices. We will also say that point $i\in M$ is in $v$
if $h(i) = v$.

Cain and Wormald~\cite{CainWormald06} analyse a deletion procedure for
obtaining the $k$-core. Here we will use a similar procedure with the
only modifications being in the first step. The procedure receives as
input $h:[2m]\to [n]$ such that $|h^{-1}(v)| \geq k$ for all $v\in
[n]$.

\medskip

\noindent \textbf{Deletion procedure -- pairing--allocation $(h)$:}
\begin{itemize}
\item Let $M = [2m]$, $L = \varnothing$ and $V = [n]$.
\item Iteration 0: Choose $i\in [m]$ uniformly at random. Find $v =
  h(i)$ and $u = h(m+i)$. Delete $i$ and $m+i$ from $M$.  If $u\neq v$
  and $|h^{-1}(v)|= k$, then delete $v$ from~$V$, add $k-1$ new
  elements to $L$ and redefine the action of $h$ on
  $h^{-1}(v)\setminus\set{i}$ as a bijection to the new
  elements. Similarly to~$u$, if $u\neq v$ and $|h^{-1}(u)|= k$, then
  delete $u$ from~$V$, add $k-1$ new elements to $L$ and redefine the
  action of $h$ on $h^{-1}(u)\setminus\set{m+i}$ as a bijection to the
  new elements. If $u=v$ and $|h^{-1}(v)|\leq k+1$, then delete $v$
  from $V$, add $|h^{-1}(v)|-2$ new elements to $L$ and redefine the
  action of $h$ on $h^{-1}(v)\setminus\set{i, m+i}$ as a bijection to
  the new elements.
\item Loop: While $L\neq\varnothing$, choose $j\in h^{-1}(L)$ uniformly at
  random. Delete $j$ and $m+j$ of $M$ and delete $h(j)$ from $L$. Find
  $v= h(m+j)$. If $v \in L$, delete $v$ from $L$. If $v\in V$ and
  $|h^{-1}(v)|=k$, then delete $v$ from $V$, add $k-1$ new elements to
  $L$ and redefine the action of $h$ on $h^{-1}(v)\setminus\set{i}$ as
  a bijection to the new elements.
\end{itemize}
Let $h_0, M_0. L_0, V_0$ be the values of $h, M, L, V$, resp., after
Iteration $0$. Let $h_i, M_i, L_i, V_i$ be the values of $h, M, L, V$,
resp., after the $i$-th iteration of the loop. Then the proof
of~\cite[Lemma~6]{CainWormald06} gives us the same conclusion
as~\cite[Lemma 6]{CainWormald06}:
\begin{lem}
  Starting with $h = \Pcal([2m], \varnothing, [n], k)$ and
  conditioning upon the values of $M_i, L_i$ and $V_i$, we have that
  $h_i$ has same distribution as $\Pcal(M_i, V_i, L_i,k)$.
\end{lem}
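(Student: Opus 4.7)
The plan is to prove the claim by induction on $i$, showing that for each $i\geq 0$, conditional on $(M_i,L_i,V_i)$, the function $h_i$ is uniform over the functions $M_i \to L_i\cup V_i$ satisfying the degree constraints defining $\Pcal(M_i,L_i,V_i,k)$. Since the only deviation from the argument in~\cite{CainWormald06} is the definition of Iteration~$0$ (we pick a random edge rather than a random point of $h^{-1}(L)$), the bulk of the work is the base case; the inductive step is identical to that in~\cite[Lemma~6]{CainWormald06} and can be quoted.

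For the base case, I would start from the fact that $h = \Pcal([2m],\varnothing,[n],k)$ is uniform over allocations with $|h^{-1}(v)|\geq k$ for all $v$, and that the edge index $i\in[m]$ is independent and uniform. Conditioning on which of the three cases occurs (both $u\neq v$ heavy bins still have size $\geq k$ after deletion; one or both of $u,v$ have their bin become of size $k-1$; or $u=v$ with $|h^{-1}(v)|\in\{k,k+1\}$), I would then argue that the induced distribution of the restriction of $h$ to $M_0 = [2m]\setminus\{i,m+i\}$, composed with the uniform random relabeling of the points $h^{-1}(v)\setminus\{i,m+i\}$ (and analogously for $u$) onto fresh elements of $L_0$, produces exactly the uniform distribution on $\Pcal(M_0,L_0,V_0,k)$. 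The key combinatorial identity is that the product of (i) the number of allocations in $\Pcal([2m],\varnothing,[n],k)$ with the prescribed behaviour on $\{i,m+i\}$, and (ii) the number of bijections chosen in the relabeling, equals (up to the correct multiplicity) the number of elements of $\Pcal(M_0,L_0,V_0,k)$; this symmetry is what makes the conditional distribution uniform.

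For the inductive step, assume $h_{i-1}\mid (M_{i-1},L_{i-1},V_{i-1})$ is distributed as $\Pcal(M_{i-1},L_{i-1},V_{i-1},k)$. Since $j$ is picked uniformly in $h_{i-1}^{-1}(L_{i-1})$ and the matching partner $m+j$ is determined by $E_M$, one splits according to whether $h_{i-1}(m+j)\in L_{i-1}$, or $h_{i-1}(m+j)\in V_{i-1}$ with bin of size $>k$, or with bin of size exactly $k$ (triggering a demotion and a uniform relabeling onto fresh $L$ elements). In each case, the same counting argument as in the base case yields uniformity of $h_i$ on $\Pcal(M_i,L_i,V_i,k)$, which is precisely the argument used in~\cite[Lemma~6]{CainWormald06}.

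The main obstacle will be the bookkeeping in Iteration~$0$: unlike in~\cite{CainWormald06}, the two endpoints $u$ and $v$ can both trigger demotions simultaneously, and the case $u=v$ introduces a third relabeling pattern absent in the Cain--Wormald setup. I would be careful to verify that in the $u=v$, $|h^{-1}(v)|=k+1$ case, the single leftover point in $v$'s bin correctly contributes one new element to $L_0$, and that the bijection chosen from $h^{-1}(v)\setminus\{i,m+i\}$ to the $|h^{-1}(v)|-2$ new elements produces the uniform measure, rather than introducing a bias. Once this accounting is done, the remainder follows the template of~\cite[Lemma~6]{CainWormald06} verbatim.
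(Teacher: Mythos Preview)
Your proposal is correct and follows exactly the approach the paper takes: the paper simply states that ``the proof of~\cite[Lemma~6]{CainWormald06} gives us the same conclusion,'' and your write-up is a faithful unpacking of that deferral, correctly identifying that only Iteration~$0$ differs from the Cain--Wormald setup and that the inductive step is verbatim theirs. Your case analysis for Iteration~$0$ (including the $u=v$ subcase) is the right bookkeeping to fill in.
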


\subsection{The case $c\to k$}
Here we prove Theorem~\ref{thm:main}(i). We assume that $c = 2m/n = k+
\phi(n)$, where $\phi(n) = o(1)$ and $\phi(n) \geq 0$.  Let $S_i$
denote the number of points in heavy vertices just after the $i$-th
iteration of the loop. Let $S_0$ denote the number of points in heavy
vertices after Iteration~$0$.  We will use $x$ as $i/n$ and $y(i/n)$
to approximate $S_i/n$.

Define $D_\gamma = \set{(x,y)\st -\gamma < 2x < k -\gamma,\ \gamma < y
  < k + \gamma}.$ Note that $D_\gamma$ is bounded, connected and
open. We choose $\gamma < \min\set{\gamma_0/3, k}$ so that the
$k$-core cannot not empty and smaller than $\gamma_0 n$ \aas
($\gamma_0$ is given by Lemma~\ref{lem:small}). Moreover, we work with
$n$ big enough so that $\phi(n) < \gamma$.  After the first step there
are at most $2(k-1)$ points in $L_0$ and all the other vertices in
$V_0$ and so $S_0 \geq 2m-2(k-1)$. Then it is clear
that $S_0/n \leq k + \phi < k + \gamma$ and $S_0/n > \gamma$.

Let $T_D = \min\set{i: (i/n, S_i/n)\not\in D}$. Let $W_i$ denote the
number of light vertices after iteration $i$ is performed.  We also use
the stopping time $T = \min\set{i: W_i = 0}$. That is, there are no
light vertices to be deleted and the deletion process has actually
ended.  We need to check the boundedness hypothesis, trend hypothesis
and Lipschitz hypothesis (see~\cite{Wormald99} for more details). The
boundedness hypothesis is trivially true: $|S_i - S_{i+1}|\leq k$
always.

Now we check the trend hypothesis. Let $f(x,y) = -ky/(k-2x)$.  Let
$H_i$ denote the history of the process at iteration $i\geq 1$. We
need to show that $\xi_1 :=| \esp{S_{i+1}-S_i | H_{i}}-f(i/n,S_i/n)| =
o(1)$ while the $i < T$ and $i < T_D$. We have that $S_{i+1}-S_i$ is
zero if $j$ is matched to a light vertex, is $-1$ if $j$ is matched to
a point in a heavy vertex with degree $> k$ and is $-k$ if $j$ is
matched to a point in a heavy vertex with degree exactly $k$.  The
probability that $j$ is matched to a point in a heavy vertex is $S_i/
(2m-2i-2)$. The probability that such a heavy vertex has degree $k$ is
at least $1 - \sum\set{d_i: d_i > k}/ S_i$ where $\ds$ is the degree
sequence (we do not sample the degree sequence, we just
decide if the vertex had degree $k$ or not). But for every possible
degree sequence $1 - \sum\set{d_i: d_i > k}/ S_i \geq 1 + n\phi(n)/S_i
= 1+o(1)$ whenever $S_i \geq \gamma n$.Then
\begin{equation*}
  \esp{S_{i+1}-S_i | H_{i}}
  =
  \frac{-k |S_i|}{2m-2i-2}+o(1),
\end{equation*}
and so the trend hypothesis holds. It is easy to see that the
Lipschitz hypothesis also holds in $D_\gamma \cap \set{(x, y): x\geq 0}$. 

 According to~\cite[Theorem
6.1]{Wormald99}, the $y'(x) = f(x,y)$ has a unique solution in
$D_\gamma$, say $y^*$, with $y(0)=k$ and a unique solution in
$D_\gamma$, say $y^{**}$, with $y(0) = S_0/n$. Note that $y^*$ is a
fixed function while $y^{**}$ is a random variable because $S_0$ is a
random variable. The Lipschitz condition implies that, for any $x$
with both $(x, y^*(x))$ and $(x, y^{**}(x))$ in $D_\gamma$, we have
that $|y^*(x)-y^{**}(x)| = x |k-S_0/n| R =: \xi_3$, where $R$ is some
big constant and so $\xi_3=o(1)$. Let $\xi_2 = o(1)$ and $\xi_2 >
\xi_1$ and $\xi_2 > \xi_3$. By~\cite[Theorem
6.1]{Wormald99}, there is a constant $C$ and a function $\xi \to 0$,
such that, \aas\ at each step $i < \min\set{T,n\sigma}$ we have that
\begin{equation}
  \label{eq:formula}
  |S_i - ny^{*}(i/n)| \leq \xi n,
\end{equation}
where $\sigma$ denotes the supremum of $x$ such that $(x', y^{*}(x'))$
and $(x', y^{**}(x'))$ are at $\ell^{\infty}$-distance at least
$C\xi_2$ of the boundary of $D_{\gamma}$ for all $0\leq x'\leq x$.

Let $\eps'$ be given by Corollary~\ref{cor:goingtok}. For $\eps' < x <
(k-\gamma)/2$, we have that
\begin{equation*}
  (k-2x) - y^*(x)
  =
  (k-2x)
  \left( 1 - \left(\frac{k-2x}{k}\right)^{k/2-1}\right)
  \geq
  \frac{2\gamma\eps'}{k}.
\end{equation*}
This implies that, if~\eqref{eq:formula} holds at $i$ where $\eps'n
<2i< (k-\gamma)n$, then $W_i = 2m-2i-2-S_i = \Omega(n)$. Thus,
if~\eqref{eq:formula} holds for some step $i\in (\eps'n, \sigma n]$
with $T >i$, then $T > i+1$ because there are still $\Omega(n)$ points
to be deleted. This implies that, conditioning upon $T > \eps'n$, we
have that $T > \sigma n$ \aas

For any constant $\alpha \in (0,\gamma)$, using the fact that $\xi_3=
o(1)$, there exists $x$ such that $x\leq \sigma n$ and $(x,y^*(x))$
and $(x,y^{**}(x))$ are at $\ell^{\infty}$-distance $(C\xi_2, \alpha)$
of the boundary of $D_\gamma$. For such an $x$ we have $T > x$ \aas\
because $T > \sigma n$ \aas{} Thus, \eqref{eq:formula}~holds \aas{}
Since $x$ is at $\ell^\infty$-distance at most $\alpha$ of the
boundary of $D_\gamma$, either $2x \geq k - \gamma - \alpha$ or
$y^*(x) \leq \gamma+\alpha$. We excluded $y^*(x) \geq k+\gamma-\alpha$
because $y^*(0) = k$ and $y^*$ decreases as $x$ increases. For $n$
sufficiently large so that $|\xi(n)| < \gamma$, the
equation~\eqref{eq:formula} for at either $2x \geq k - \gamma - \alpha$ or
$y^*(x) \leq \gamma+\alpha$ shows that $S_i\leq n\gamma_0$ \aas

Since $T >\eps'n$ \aas\ by Corollary~\ref{cor:goingtok}, the $k$-core
would have to be smaller than $\gamma_0 n$ and so it must be empty
\aas\ (see Section~\ref{sec:small}).  We conclude that $W(\multiknm) =
n$ \aas{} Since the probability that $\multiknm$ is simple is
$\Omega(1)$ (see~\cite{McKay85, McKayWormald91}), we have that
$W(\simpleknm) = n$ \aas


\subsection{The case $c\in [k+\eps, c+k' - \eps]$} 

\label{sec:de-inter}
We prove Theorem~\ref{thm:main}(ii) except for the claim that
$W(\simpleknm) = n$ with probability bounded away from zero, which is
addressed in Section~\ref{sec:simple}.  Let $h(n)\to \infty$. Let
$\eps'$ be given by Corollary~\ref{cor:multi_inter}.  Assume that $c\to
C \in [k+\eps, c+k' -\eps]$, where $C$ is a constant. We will explain
later how to drop this constraint.

Again we use the differential equation method as in~\cite{Wormald99}
and the deletion procedure described in the beginning of the section.
For each~$i$, let $S_i$ denote the number of points in heavy vertices
just after iteration $i$, let $T_i$ denote the number of heavy
vertices just after iteration $i$ and let $W_i$ denote the number of
points in light vertices just after iteration~$i$. We will use the
differential equation method to approximate $S_i$ and $T_i$. Note that
$W_i = 2m - 2i - 2 - S_i$. We will use $y(i/n)$ to approximate $S_i/n$
and $z(i/n)$ to approximate $T_i/n$.

Let $\gamma$ be a positive constant with $\gamma < \min\set{1, C-k}$
to be chosen later. Define
\begin{equation*}
  D_\gamma = \set{(x,y,z)\st \gamma < z < 1 + \gamma,\ 
    -\gamma < x < C-\gamma,\ 
    \gamma < y < C+\gamma, y > (k+ \gamma)z}.
\end{equation*}
Then $D_\gamma$ is bounded, connected and open. We have $T_0\in
\set{n, n-1, n-2}$ and $S_0 \in[2m-2-2(k-1), 2m-2]$. Thus, $T_0/n = 1 + o(1/n)$ and $S_0/n = C+o(1)$.
Then $D_\gamma$ contains the closure of the points $(0, y,z)$ such
that $\prob{S_i = yn \text{ and }T_i=zn}\neq 0$ for some~$n$.

We use the stopping time $T = \min\set{i: W_i = 0}$ again. We have to
check the boundedness hypothesis, the trend hypothesis and the
Lipschitz hypothesis. The boundedness hypothesis is again easy:
$|S_{i+1}-S_i|\leq k$ and $|T_{i+1}-T_i|\leq 1$ always.

The trend hypothesis is exactly like in~\cite{CainWormald06} with
$|\esp{T_{i+1}-T_i| H_i} -f_z(i/n)| = \xi_z = o(1)$ and
$|\esp{S_{i+1}-S_i| H_i} -f_y(i/n)| = \xi_y = o(1)$ with
\begin{align*}
  f_z(x)
  &=
  - \frac{y}{C-2x}
  \left(1-\frac{\mu z}{y}\right)
  \quad\text{and}\quad f_y(x)
  =
  - \frac{y}{C-2x}
  \left(k-(k-1)\frac{\mu z}{y}\right),  
\end{align*}
where $\mu = \lambda_{k, y/z}$. The Lipschitz hypothesis is
straightforward to check.

According to~\cite[Theorem 6.1]{Wormald99}, the $y'(x) = f_y(x)$ and
$z'(x) = f_z(x)$ has unique solutions $(y^*, z^*)$ and $(y^{**},
z^{**})$, with initial conditions $y(0)=C$ and $z(0)=1$, and $y(0) =
S_0/n$ and $z(0) = T_0$, resp. The Lipschitz hypothesis implies that,
there exists a constant $R$ such that, for any $x$ with both $(x,
y^*(x),z^*(x))$ and $(x, y^{**}(x), z^{**}(x))$ in $D_\gamma$, we have
 $\max\set{|y^*(x)-y^{**}(x)|,|z^*(x)-z^{**}(x)|} \leq x
|k-S_0/n|R =: \xi_3$. Note that $\xi_3=o(1)$.  Let $\xi_2 > \xi_z$,
$\xi_2 > \xi_y$, $\xi_2>\xi_3$, $\xi_2 = o(1)$. Thus, by~\cite[Theorem
6.1]{Wormald99}, there is a constant $C_0$ and a function $\xi \to 0$,
such that, \aas\ at each step $i < \min\set{T,n\sigma}$ we have that
\begin{equation}
  \label{eq:formula2}
  \max\set{|S_i - ny^{*}(i/n)|, |T_i - n z^*(i/n)|} \leq \xi n,
\end{equation}
where $\sigma$ denotes the supremum of $x$ such that, for all $0\leq
x'\leq x$, we have $(x' ,
y^{*}(x'),z^*(x'))$ and $(x', y^{**}(x'),z^{**}(x'))$ are at
$\ell^{\infty}$-distance at least $C_0\xi_2$ of the boundary of
$D_{\gamma}$.

According to~\cite{CainWormald06}, we have that $\mu^2/(C-2x)$ and
$(ze^{\mu})/f_k(\mu)$ are constants. With initial conditions $y(0) =
C$ and $z(0) = 1$, we get $\mu^2/(C-2x)=\lambda_{k,C}^2/C$ and
$ze^{\mu}/f_k(\mu) = e^{\lambda_{k,C}}/f_k(\lambda_{k,C})$, which can be used
to deduce that
\begin{equation}
  \label{eq:diffeq}
  y^* = (C-2x)\frac{h_k(\lambda_{k,C})}{h_k(\mu)}.
\end{equation}

For $x\geq \eps' /2$, we must have $\mu(x) \leq \lambda_{k,C}
\sqrt{1-\eps'/C}$ and so $h_k(\mu)\geq (1+\eps'')h_k(\lambda_{k,C})$,
for some $\eps''>0$. Thus, for every $x$ such that $\eps' \leq 2x \leq
C-\gamma$ using~\eqref{eq:diffeq},
\begin{equation*}
  C-2x - (C-2x)\frac{h_k(\lambda_{k,C})}{h_k(\mu)} \geq \gamma\eps''.
\end{equation*}
This implies that, if~\eqref{eq:formula} holds at $i$ with $\eps'n
<2i<(C-\gamma)n$, then $W_i = 2m-2i-2-S_i = \Omega(n)$. Thus,
if~\eqref{eq:formula} holds for some step $i\in (\eps'n, \sigma n]$
with $T >i$, then $T > i+1$ because there are still $\Omega(n)$ points
to be deleted. This implies that, conditioning upon $T > \eps'n$, we
have that $T > \sigma n$ \aas

For any constant $\alpha \in (0,\gamma)$, using the fact that $\xi_3=
o(1)$, there exists $x$ such that $x\leq \sigma n$ and $(x,y^*(x),
z^*(x))$ and $(x,y^{**}(x), z^{**}(x))$ are at
$\ell^{\infty}$-distance $(C_0\xi_2, \alpha)$ of the boundary of
$D_\gamma$. For such an $x$ we have $T > x$ \aas\ because $T > \sigma
n$ \aas{} Thus, \eqref{eq:formula2}~holds \aas{} Since $x$ is at
$\ell^\infty$-distance at most $\alpha$ of the boundary of $D_\gamma$,
either $z^*(x)\leq \gamma+\alpha$ or $2x\geq C-\gamma-\alpha$ or
$y^*(x) \leq \gamma +\alpha$ or $y^*(x) / z^*(x) \leq k + \gamma
+\alpha$.  We excluded $y^*(x) \geq C+\gamma-\alpha$ and $z^*(x)\geq
1+\gamma-\alpha$ because $y^*(0) = C$ and $z^*(0)=1$ and $f_y$ and
$f_z$ are decreasing.

If $2x \geq C - \gamma - \alpha$, then, using that $\mu^2/(C-2x)$
remains constant and $\mu(0) = \lambda_{k,C}$ with $C < c_k' $, we have
that $h_k(\mu)\geq h_k(\lambda_{k,C})$ and so $y^*(x) \leq C-2x\leq
\gamma+\alpha$. For $n$ sufficiently large so that $|\xi(n)| < \gamma$, it is easy to see that the
equation~\eqref{eq:formula} for at $y^*(x) \leq \gamma+\alpha$ or
$z^*(x)\leq \gamma+\alpha$ implies that $S_i\leq 3\gamma n$ \aas{} We
still have to check what happens when $y^*(x) / z^*(x) \leq k + \gamma
+\alpha$. In this case, $\mu = O(\gamma+\alpha)$. This holds because
the function $g_k(x) = x f_{k-1}(x)/f_k(x)$ with domain $(0, 2\gamma)$
is strictly increasing and the limit of its one-sided derivative with
$x\to 0$ is $1/(k+1)$. This limit can be computed by the derivative of
this function which is $(1/x)g_k(x) (1+g_{k-1}(x)-g_k(x))$ and then
using Taylor's approximation for $g_k(x)$ and $g_{k-1}(x)$ around
$x\to 0$. For more details on this computation see~\cite[Lemma
1]{PittelWormald03} and its proof. Using that $\mu^2/(C-2x)$ during
the process, we then have $C-2x = O(\gamma^2)$, we can then conclude
that the $S_i = O(\gamma n)$ \aas

Thus, conditioned upon $T > \eps'n$ the $k$-core has at most $O(\gamma
n)$ vertices \aas{} Let $\gamma_0$ be the constant given by
Lemma~\ref{lem:small}.  By choosing $\gamma$ small enough, we can
conclude that, conditioned upon $T > \eps'n$, the $k$-core has less than
$\gamma_0 n$ vertices \aas\ and which implies, by
Lemma~\ref{lem:small}, that the $k$-core must be empty \aas{} By
Corollary~\ref{cor:multi_inter}, we have that $W(\multiknm)\leq h(n)$
or $W(\multiknm)=n$ with probability $1+o(1)$ conditioned upon $T >
\eps'n$ (where the convergence depends on $c$).

Recall that we assumed $c\to C$. We show how to drop this assumption
here. Let $(c_i)_{i\in \setN}$ such that every $c_i\in[k+\eps, c+k'
  -\eps]$. Let $r(n)$ be the probability that neither
$W(\multiknm)\leq h(n)$ nor $W(\multiknm)=n$. Then every subsequence
of $(c_i)_{i\in \setN}$ has a subsequence that converges to some
constant $C_0$ and in that subsequence $r(n)\to 0$. So by the
subsubsequence principle $r(n)\to 0$. Since the probability that
$\multiknm$ is simple is $\Omega(1)$, we have that $W(\simpleknm)\leq
h(n)$ or $W(\simpleknm)=n$ \aas

\subsection{No small $k$-cores}

\label{sec:small}

\begin{lem}
  \label{lem:small}
  Let $C_0$ be a constant.  Suppose that $m = m(n)$ satisfies $kn \leq
  2m\leq C_0 n$. Then there exists a constant $\gamma$ such that
  \aas\ the graph obtained from $\multiknm$ by deleting an edge chosen
  uniformly at random  either has a $k$-core of size at least
  $\gamma n$ or its $k$-core is empty.
\end{lem}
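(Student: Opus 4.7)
The plan is a first moment argument on the multigraph $\multi = \multiknm$ alone, ignoring the structure of the removed edge~$e$. The key observation is that if the $k$-core $K$ of $\multi - e$ is non-empty, then $K$ induces a subgraph of $\multi - e$ with minimum degree at least~$k$, and therefore (since $\multi - e$ is a subgraph of $\multi$) the induced subgraph $\multi[K]$ also has minimum degree at least~$k$. Hence it suffices to find a constant $\gamma > 0$ (depending only on $k$ and~$C_0$) such that
\begin{equation*}
\prob[\big]{\exists\, S \subseteq [n]\st 1\leq |S|\leq \gamma n,\ \multi[S]\text{ has minimum degree}\geq k}
= o(1).
\end{equation*}

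I would prove this by a first moment estimate. Let $Y_s$ denote the number of subsets $S\subseteq [n]$ of size $s$ such that $\multi[S]$ has minimum degree at least $k$; it is enough to show $\sum_{s=1}^{\gamma n}\esp{Y_s}=o(1)$. As in Section~\ref{sec:proofcor_super}, I would first condition on the degree sequence: Chebyshev's inequality and the truncated-Poisson tail bound give $\ds(\multi)\in\hDknm$ \aas, so it suffices to bound $\esp{Y_s(\ds)}$ uniformly over ``typical'' $\ds\in\hDknm$ (those with $\max_i d_i\leq n^\eps$ and all $D_j(\ds)$ close to $\esp{D_j(\Ys)}$ for $\Ys=(Y_1,\dotsc,Y_n)$ with $Y_i\sim \tpoisson{k}{\lambda_{k,c}}$ i.i.d.).

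For a fixed $S$ of size $s$, the condition ``minimum degree $\geq k$ in $\multi(\ds)[S]$'' requires at least $k$ points of each bin of $S$ to be paired inside $\bigcup_{i\in S}S_i$; choosing $k$ specified points per bin and union-bounding yields the standard pairing-model estimate
\begin{equation*}
\prob[\big]{\multi(\ds)[S]\text{ has minimum degree}\geq k}
\;\leq\;
\prod_{i\in S}\binom{d_i}{k}\cdot P_{\mathrm{match}}(s, d_S),
\end{equation*}
where $d_S=\sum_{i\in S}d_i$ and $P_{\mathrm{match}}(s,d_S)$ denotes the pairing-model probability that $ks$ specified points all pair inside $\bigcup_{i\in S}S_i$. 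Multiplying by $\binom{n}{s}$ and applying Stirling's formula gives $\esp{Y_s(\ds)}\leq \exp\paren[\big]{nF(s/n)(1+o(1))}$ for a continuous rate function $F$ determined by $k$ and the truncated-Poisson parameter $\lambda_{k,c}$. For $s = O(1)$ (or even $s=o(\log n)$), using $k\geq 3$ together with $\max_i d_i \leq n^\eps$ directly gives $\esp{Y_s(\ds)} = n^{-\Omega(s)}$, whose sum over such $s$ is $o(1)$; for $s = \alpha n$ with $\alpha\in (0, \gamma]$, negativity of $F(\alpha)$ gives exponential decay in $n$.

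The main obstacle is thus the rate function analysis for linear $s$, namely showing $F(\alpha)\to -\infty$ as $\alpha \to 0^+$. Heuristically, the ``cost'' of isolating a vanishingly small subset inducing minimum degree $\geq k$ inside an ambient multigraph of bounded average degree diverges. This is structurally the same computation underlying the ``no intermediate-sized cores'' part of the analyses in~\cite{PittelSpencerWormald96} and~\cite{JansonLuczak07}, and the freedom to take $\gamma$ as small as desired makes the negativity of $F$ near $0$ the only essential point to verify; one may either cite those first-moment estimates directly or redo the routine Stirling-based computation in the present $\hDknm$-conditioned setting.
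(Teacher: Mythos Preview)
Your reduction---observing that any non-empty $k$-core of $\multi - e$ gives a subgraph of $\multi$ itself with minimum (hence average) degree at least $k$, so that it suffices to rule out small such subgraphs in $\multi$---is exactly the reduction the paper uses, and the first-moment strategy is the right one. The paper, however, does not carry out the first-moment computation: it applies \cite[Lemma~5.1]{JansonLuczak07} as a black box, which states that whenever the degree sequence satisfies $\sum_i e^{\alpha d_i}\leq Rn$ for some constants $\alpha,R$, then \aas\ no subgraph of $\multi(\ds)$ on fewer than $\gamma n$ vertices has average degree at least~$k$. The entirety of the paper's proof is then devoted to verifying this exponential-moment hypothesis for the truncated-Poisson degree sequence of $\multiknm$, via tail estimates on $\max_i Y_i$ and concentration of the $D_j(\Ys)$.

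Your hands-on sketch would, once made precise, essentially reprove the Janson--\Luczak\ lemma in this special case; in particular the step you gloss over---controlling the contribution of subsets $S$ with atypically large $d_S=\sum_{i\in S}d_i$ in the union bound for linear~$s$---is exactly where an exponential-moment (or equivalent) condition on the degrees enters, and neither $\max_i d_i\leq n^\eps$ nor the trivial bound $d_S\leq 2m$ alone suffices. One small slip: $F(\alpha)$ does not tend to $-\infty$ as $\alpha\to 0^+$ but to $0$ from below (it behaves like $(k/2-1)\alpha\log\alpha$), so summability over the whole range $1\leq s\leq \gamma n$ requires a uniform estimate of the shape $\esp{Y_s(\ds)}\leq (Cs/n)^{cs}$ rather than merely $F<0$ on $(0,\gamma]$.
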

\begin{proof}
  This is an application of a result by Luczak and Janson~\cite[Lemma
  5.1]{JansonLuczak07}: if a degree sequence $(\ds_n)_{n\in \setN}$
  satisfies $\sum_{i} e^{\alpha d_i} \leq Rn$ for constants $\alpha$
  and $R$, then there is a constant~$\gamma$ such that \aas\ no
  subgraph of $\multi(\ds)$ with less than $\gamma n$ vertices has
  average degree at least~$k$.  We set $\alpha < 1/3$ and we will
  choose $R$ later.  Let $\cDknm \subseteq \Dknm$ be the set of degree
  sequences $\ds$ such that $\sum_{i} e^{\alpha d_i} \leq Rn$. It
  suffices to show that the degree sequence $\ds = \ds(\multiknm)$ is
  in $\cDknm$ \aas

  Let $\Ys = (Y_1,\dotsc, Y_n)$ be such that the $Y_i$'s are
  independent random variables with distribution
  $\tpoisson{k}{\lambda_{k,c}}$. As already mentioned before, $\ds$
  has the same distribution of $\Ys$ conditioned upon the event that
  $\sum_i Y_i = 2m$. Using~\cite[Theorem 4]{PittelWormald03}, one can
  prove that $\prob{\sum_{i} Y_i = 2m} = \Omega(1/ \sqrt{n})$. 

  For $J_0$ big enough (depending only on~$C_0$), we have that
  $\lambda_{k,C_0}/J_0\leq e^{-1}$, which implies
  $\lambda_{k,c}/J_0\leq e^{-1}$.  Clearly, $\sum_{j\leq J_0} e^{\alpha
    j}D_j(\Ys) \leq e^{\alpha J_0} n$. Let $J_1 = J_0+ (1+\beta) \log n$
  with $\beta \in (1/2,(2\alpha)^{-1}-1)$. Let $p =
  \lambda_{k,c}^{J_0-1}/((J_0-1)!f_k(\lambda_{k,c}))$. Then
\begin{equation*}
  \begin{split}
    \prob{\exists j > J_1 \text{ with }D_j(\Ys)>0}
    &\leq
    n p\sum_{i\geq 0}\frac{1}{e^{(1+\beta)\log n+i}}  
    \leq
    \frac{p n^{-\beta}}{1-e^{-1}}
    = O(n^{-\beta}).
  \end{split}
\end{equation*}
Using the fact that $\prob{\sum_i Y_i = 2m} = \Omega(1 / \sqrt{n})$
and $\beta \in (1/2,(2\alpha)^{-1}-1)$, we conclude that $\prob{\max_i
  d_i > J_1} = o(1)$. And so $\sum_{j>J_1} e^{\alpha j}D_j(\Ys) = 0$
\aas

Now we consider $j\in (J_0, J_1]$. By Hoeffding's inequality and using
the fact that $\prob{\sum_i Y_i =2m} = \Omega(1/ \sqrt{n})$, we have
that $\prob{|D_j(\Ys) -p^{(j)}n|\geq a \sqrt{n}} = O(\sqrt n) e^{-a^2}
$, where $p^{(j)} = \lambda_{k,c}^j/(j!f_{k}(\lambda_{k,c}))$. Thus,
\begin{equation*}
  \begin{split}
    \prob[\Big]{|D_j(\Ys) -p^{(j)}n|\geq a \sqrt{n} \text{ for some }j\in(J_0,J_1]}
    &=
    (1+\beta) \log n O(\sqrt n) e^{-a^2}
    \\
    &=
  O(n^{-\beta'}),
  \end{split}
\end{equation*}
for $a = \sqrt{(1+\beta')\log n}$ with $\beta'>0$. Thus, \aas
\begin{equation*}
  \begin{split}
    \sum_{j=J_0+1}^{J_1}
    &e^{\alpha j} D_j(\Ys)
    \leq
    e^{\alpha J_0}
    \sum_{j=1}^{(1+\beta)\log n}
     e^{\alpha j} \left(p\frac{1}{e^j}n + a\sqrt{n}\right)
    \\
    &\leq
    e^{\alpha J_0}
    \left(
      n p \sum_{j=1}^{(1+\beta)\log n} e^{-2j/3}
    +
    e^{\alpha (J_1-J_0)}(J_1-J_0) a\sqrt{n}\right)
    \\
    &\leq
    e^{\alpha J_0}\left(
    \frac{p}{e^{2/3}(1-e^{-2/3})}
    + \frac{n^{(1+\beta)\alpha + 1/2}(\log n)^{3/2}}{n}\sqrt{1+\beta'}(1+\beta)
    \right) n.
    \end{split}
\end{equation*}
Using that $1+\beta < (2\alpha)^{-1}$, we can set $R = e^{\alpha
  J_0}(1 + p/(1-e^{-2/3})+\sqrt{1+\beta'}(1+\beta))$.
\end{proof}
 

\section{Working with simple graphs}
\label{sec:simple}

\begin{lem}
  \label{lem:simple_infty}
  Let $\eps > 0$ be a fixed real. Suppose that $c = 2m/n$ satisfies
  $k+\eps \leq c\leq c_k'-\eps$. Then there exists a function
  $h(n)\to\infty$ such that $\prob{W(\simpleknm) \geq h(n)} = \Omega(1)$.
\end{lem}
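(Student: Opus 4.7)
The plan is to transfer Corollary~\ref{cor:multi_inter} to the simple model via the identity
\begin{equation*}
  \prob{W(\simpleknm) \geq h(n)}
  = \frac{\prob{W(\multiknm) \geq h(n) \text{ and } \multiknm \text{ is simple}}}
         {\prob{\multiknm \text{ is simple}}},
\end{equation*}
whose denominator is $\Omega(1)$ by the McKay--Wormald asymptotic used in Section~\ref{sec:proofcor_super}. I would choose $h(n)$ growing very slowly, say $h(n) = \log\log n$, condition on the \aas{} event $\ds(\multiknm) \in \hDknm$, and examine only the first $t(n) = Ch(n)$ iterations of the deletion procedure of Section~\ref{sec:exploration_multi_super} for a sufficiently large constant $C$ depending on $k$ and $\eps$.

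The critical observation is that once a vertex is \emph{activated} in the procedure (its bin drops from $\geq k$ to $k-1$), its $k-1$ marked points are each eventually deleted, so the vertex is eventually fully deleted. Hence, writing $A_{t(n)}$ for the number of vertices activated during the first $t(n)$ iterations, $W(\multi(\ds)) \geq A_{t(n)}$. The pathwise coupling $Y_j(\ds) \geq Y_j^-$ used in the proof of Theorem~\ref{thm:inter_degrees}, together with Proposition~\ref{prop:meangreaterthan1}, ensures that with probability $\Omega(1)$ the walk $Y_j^-$ (hence $Y_j(\ds)$) stays positive throughout $j \leq t(n)$, so the procedure does not terminate before iteration $t(n)$. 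Since $q_{k,c} > 1/(k-1)$ uniformly in our regime by Proposition~\ref{prop:mean}(ii), picking $C$ large enough that $C(q_{k,c}-\xi) > 1$ and applying Chernoff to the i.i.d.\ Bernoullis $\mathbf{1}[Z_j^- = k-1]$ yields $\prob{A_{t(n)} \geq h(n)} = \Omega(1)$.

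It remains to show that, conditional on $A_{t(n)} \geq h(n)$, the full pairing is still simple with probability $\Omega(1)$. Only $t(n)+1$ edges have been revealed after $t(n)$ iterations; since $\ds \in \hDknm$ gives $\max_i d_i \leq n^{\eps_0}$ for a fixed $\eps_0 < 1/4$, a birthday-type bound shows these revealed edges contain no loop or multi-edge among themselves with probability $1 - O(t(n)^2 n^{\eps_0}/n) = 1-o(1)$. By deferred decisions, conditional on this history the unrevealed edges form a uniform random matching on the remaining half-edges, whose degree sequence $\ds'$ differs from $\ds$ in only $O(t(n)) = o(n)$ entries; thus $\ds' \in \hDknm$ and the McKay--Wormald estimate applied to $\ds'$ gives conditional probability $\exp(-\bar\eta_c/2 - \bar\eta_c^2/4) + o(1) = \Omega(1)$ that the remaining matching contracts to a simple multigraph. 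A further $O(t(n)^2/n) = o(1)$ bounds cross multi-edges between revealed and unrevealed edges. Combining,
\begin{equation*}
  \prob{W(\multiknm) \geq h(n) \text{ and } \multiknm \text{ is simple}}
  \geq \prob{A_{t(n)} \geq h(n)} \cdot \Omega(1) = \Omega(1),
\end{equation*}
which via the identity above proves the lemma. The key technical move is to track activations $A_{t(n)}$ rather than completions $W$: this provides a useful lower bound in only $\Theta(h(n))$ iterations, few enough that all birthday and degree-perturbation errors remain $o(1)$ while the $\Omega(1)$ contributions from McKay--Wormald and Proposition~\ref{prop:meangreaterthan1} survive intact.
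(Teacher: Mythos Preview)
Your approach is correct and takes a genuinely different route from the paper's. The paper does not use the conditional-simplicity / deferred-decisions argument you outline. Instead it introduces a vertex-by-vertex variant of the deletion procedure and couples it \emph{directly} between $\simple(\ds)$ and $\multi(\ds)$: using McKay's conditional edge-probability theorem (packaged as Lemma~\ref{lem:mckay_main}, with the Erd\H os--Gallai theorem supplying the required realizability check in Lemma~\ref{lem:exists_simplegraph}), the paper shows that each transition probability in the simple model matches its multigraph counterpart up to a factor $1+\xi(n)$ with $\xi(n)=o(1)$, so the two procedures can be made to coincide for $t(n)=o(1/\xi)$ steps. Then Theorem~\ref{thm:inter_degrees} gives $\prob{W(\multi(\ds))\geq h(n)}=\Omega(1)$ for $h(n)\leq t(n)$, and the coupling transfers this to $\simple(\ds)$. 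Your argument is arguably more elementary: it stays entirely inside the pairing model and needs only the McKay--Wormald simplicity asymptotic already quoted in Section~\ref{sec:proofcor_super}, avoiding McKay's edge-probability theorem and Erd\H os--Gallai altogether. The paper's route, on the other hand, gives an explicit step-by-step coupling between the two models which could be reused for finer statements.

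Two small remarks on your write-up. First, the Chernoff step is unnecessary: if the procedure survives $t(n)$ loop iterations then it has consumed $t(n)$ marked points, all of which were created by at most $2+A_{t(n)}$ activations each contributing at most $k-1$ points, so $A_{t(n)}\geq t(n)/(k-1)-2$ deterministically; choosing $t(n)=(k-1)(h(n)+2)$ already suffices. Second, the cross-multi-edge bound should read $O(t(n)\,d_{\max}^2/n)$ rather than $O(t(n)^2/n)$, and $\ds'$ is not literally in $\hDknm$ (it has the wrong edge total and may have entries below $k$); both points are harmless since $O(t(n)\,n^{2\eps_0}/n)=o(1)$ and the McKay--Wormald formula only needs $\max_i d_i' = o(m^{1/4})$ and $\eta(\ds')=\bar\eta_c+o(1)$, which hold.
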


Together with Section~\ref{sec:de-inter}, this lemma implies that
$\prob{W(\simpleknm) = n} = \Omega(1)$, which completes the proof of
Theorem~\ref{thm:main}(ii).

We now prove Lemma~\ref{lem:simple_infty}. We will work with degree
sequences. Let $\simple(\ds)$ be chosen uniformly at random from all
(simple) $k$-cores with degree sequence~$\ds$. Let $\phi(n) = o(1)$
with $\phi(n) = \omega(n^{-1/4})$. Let $\Ys = (Y_1,\dotsc, Y_n)$ be
such that the $Y_i$'s are independent truncated Poisson variables with
parameters $(k, \lambda_{k, c})$.  Let $\tDknm$ be the degree
sequences $\ds$ such that $|D_k(\ds)-\esp{D_k(\Ys)}|\leq n\phi(n)$ and
$\max_i d_i \leq n^{\beta}$ for some $\beta \in(0,0.25)$ and
$|\eta(\ds) - \esp{\eta(\Ys)}| \leq \phi(n)$.  Similarly to the proof
in Section~\ref{sec:proofcor_super}, one can prove that
$\ds(\simpleknm)\in\tDknm$ \aas{} Thus, it suffices to show that,
there exists $h(n)\to\infty$ such that
\begin{equation*}
  \prob{W(\simple(\ds))\geq h(n)} = \Omega(1),
\end{equation*}
for $\ds\in\tDknm$.

For $\ds\in\tDknm$, we will couple deletion algorithms for $\simple(\ds)$
and $\multi(\ds)$ so that they coincide for $t(n) \to \infty$ steps.
We use a deletion algorithm that is essentially the same as the one we
used in the other sections. The only difference is that we explore a
whole vertex at a time (instead of an edge at a time) and mark the
vertices that have to be deleted. 

\noindent \textbf{Deletion procedure by vertex:}
\begin{itemize}
\item Iteration 0: Choose an edge $uv$ uniformly at random, delete
  $uv$ and mark the vertices with degree less than $k$.
\item Loop: While there is an undeleted marked vertex, say $w$, find
  its neighbours, delete $w$ and the edges incident to it, and then
  mark all neighbours of $w$ that now have degree less than $k$.
\end{itemize}
If we can do such a coupling for $t(n)\to\infty$ iterations of the loop,
then we can choose $h(n)\to\infty$ such $h(n)\leq
\min\set{t(n),\eps'n}$ with $\eps'$ as in
Theorem~\ref{thm:inter_degrees} so that $\prob{W(\multi(\ds))\geq
  h(n)} = \Omega(1)$. This would imply that the deletion algorithm did not
stop for at least $h(n)$ steps and so $\prob{W(\simple(\ds))\geq h(n)}
\geq \Omega(1)$.

In the rest of this section, we show that there exists $t(n)\to\infty$
such that we can couple the deletion algorithms for $\simple(\ds)$ and
$\multi(\ds)$ so that they coincide for $t(n)$ iterations of the
loop. For now assume that $t(n)\to\infty$ with $t(n)\leq \log
n$. Later we add more restrictions on the growth of $t(n)$.  We show
that the probabilities that a certain edge $uv$ is chosen in the first
step are asymptotically equivalent for $\simple(\ds)$ and
$\multi(\ds)$ and so the first step can be coupled. For the other
steps $i\leq t(n)$, we show that the probabilities that the set of
neighbours of the vertex $w$ is some specific set are again
asymptotically equivalent for $\simple(\ds)$ and $\multi(\ds)$ with
some error $\xi(n) = o(1)$. So we can couple the deletion algorithms
for $t(n)$ steps, where $t(n)$ will depend on $\xi(n)$. In the
computations in this section, we will use $\probmultiop$ to denote the
probabilities in the deletion procedure for $\multi(\ds)$ and we will
use $\probsimpleop$ to denote the probabilities in the deletion
procedure for $G(\ds)$. First we analyse the procedure for
multigraphs. Let $uv \in \binom{V}{2}$. Then
\begin{equation*}
  \begin{split}
    \probmulti{uv\text{ is chosen in the first step}}
    &=
    \frac{\probmulti{uv\in E(\multi(\ds))}}{m}
    \\
    &=
    \frac{d_ud_v}{m}\frac{(2m -2)!2^{ m}  m!}{(2 m)! 2^{ m-1}(m-1)!}
    \\
    &=
    \frac{d_ud_v}{m(2m-1)}
    =
    \frac{d_ud_v}{m(2m)}(1+\xi_1(n)),
  \end{split}
\end{equation*}
where $\xi_1(n) = o(1)$.

In $i$-th iteration of the loop, we delete a vertex $w$ and find its
set of neighbours $U$. Let $\ell$ be the current degree of $w$ and let
$\set{u_1,\dotsc, u_\ell}$ be a subset of $\ell$ undeleted
vertices. Let $x_1,\dotsc, x_\ell$ be an enumeration of the points
inside $w$. Let $y_1,\dotsc, y_\ell$ be the points matched to
$x_1\dotsc, x_\ell$. Let $\check m$ be the number of undeleted edges
at the beginning of the $i$-th iteration of the loop and let
$\check\ds$ be the degree sequence of the current graph. Using $[x]_j := (x)(x-1)\dotsc(x-j+1)$, we have
\begin{equation*}
  \prob{U = \set{u_1,\dotsc, u_\ell}}
  =
  \ell!
  \prob{y_i \in u_i \ \forall i}
  =
  \frac{\ell!\prod_{i=1}^\ell \check d_{u_i}}{2^\ell[\check m]_\ell}(1+\xi_2(n)),
\end{equation*}
where $\xi_2(n)= o(1)$ because $\check m\geq m-kt(n)\geq m-k\log n$.
Now we have to compute estimates for the probabilities in the
deletion algorithm for simple graphs. The following lemma is an
application of~\cite[Theorem 10]{McKay81}.

\begin{lem}
  \label{lem:mckay_main}
  Let $\ds\in\Dknm$ be such that $\max_i d_i\leq n^{0.25}$.
  Let $H$ be a graph on $[n]$ with at most $k t(n)$ edges. Let $L$
  be a supergraph of $H$ with at most $k$ edges more than $H$ such that
  there is a simple graph $G$ with degree sequence $\ds$ such that
  $G\cap L = H$. Then
  \begin{equation*}
    \probcond{L\subseteq \simple(\ds)}{H\subseteq \simple(\ds)}
    =
    \frac{\prod_{v=1}^{n}[d_i - h_i]_{j_i}}
    {2^{\card{E(J)}}[m]_{\card{E(J)}}} (1+ \nu(n)),
  \end{equation*}
  where $\hs$ is the degree sequence of $H$, $J = L-E(H)$, $\js$ is
  the degree sequence of $J$, and $\nu(n)=o(1)$.
\end{lem}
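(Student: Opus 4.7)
The plan is to express the conditional probability as a ratio of counts and apply McKay's enumeration formula to both numerator and denominator. For a graph $F$ on $[n]$, write $N(\ds, F)$ for the number of simple graphs on $[n]$ with degree sequence $\ds$ that contain every edge of $F$; then
\begin{equation*}
  \probcond[\big]{L \subseteq \simple(\ds)}{H \subseteq \simple(\ds)}
  = \frac{N(\ds, L)}{N(\ds, H)}.
\end{equation*}
A simple graph with degree sequence $\ds$ containing $H$ corresponds bijectively to a simple graph on $[n]$ with degree sequence $\ds - \hs$ that avoids every edge of $H$, and analogously for $L$ with $\ds - \ls$ avoiding every edge of $L$. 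This is the form to which~\cite[Theorem~10]{McKay81} applies, and the nondegeneracy condition $G \cap L = H$ guarantees that the set being counted for $L$ is nonempty.

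Applying McKay's formula under the hypothesis $\max_i d_i \leq n^{0.25}$, each count is expressed as the appropriate configuration-model count $(2m')!/(2^{m'}m'!\prod_v (d_v-h_v)!)$ (respectively with $\ls$) multiplied by a correction factor of the form $\exp(-\eta/2 - \eta^2/4 + \mathrm{err})$ depending on the relevant modified degree sequence. The sequences $\ds - \hs$ and $\ds - \ls$ differ by at most $k = O(1)$ in at most $2k$ coordinates, each coordinate being bounded by $n^{0.25}$, so a short Taylor estimate yields $|\eta(\ds-\hs) - \eta(\ds-\ls)| = O((\max_i d_i)^2/m) = o(1)$ and the residual error terms agree to leading order. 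Hence the correction factors cancel in the ratio up to $1 + o(1)$.

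The ratio of the configuration-model counts then telescopes. Writing $m_H = m - |E(H)|$ and using $\ell_v = h_v + j_v$ together with $(d_v-h_v)!/(d_v-\ell_v)! = [d_v-h_v]_{j_v}$, we obtain
\begin{equation*}
  \frac{N(\ds, L)}{N(\ds, H)}
  =
  \prod_{v=1}^n [d_v - h_v]_{j_v}
  \cdot
  \frac{2^{|E(J)|}[m_H]_{|E(J)|}}{[2m_H]_{2|E(J)|}}
  \cdot (1 + o(1)).
\end{equation*}
The algebraic identity $2^{|E(J)|}[m_H]_{|E(J)|}/[2m_H]_{2|E(J)|} = \prod_{i=0}^{|E(J)|-1}(2m_H - 2i - 1)^{-1}$, together with $|E(J)| \leq k = O(1)$ and $|E(H)| \leq k t(n) \leq k\log n = o(m)$, implies that this quotient equals $1/(2^{|E(J)|}[m]_{|E(J)|})$ up to a further $(1+o(1))$ factor, yielding the claimed formula.

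The main technical obstacle is bookkeeping: verifying that McKay's formula is precise enough for the ratio to hold with multiplicative error $o(1)$. Concretely one must check (a) that the sensitivity of the $\exp(-\eta/2-\eta^2/4)$ factor to the perturbation $\ls - \hs$ of the degree sequence is $o(1)$, and (b) that the residual $\exp(O((\max_i d_i)^4/n))$ error in McKay's enumeration cancels in the ratio to within $1+o(1)$. Both reduce to elementary Taylor estimates once we combine the maximum-degree bound $\max_i d_i \leq n^{0.25}$ with the smallness of $|E(H)|$ relative to $m$.
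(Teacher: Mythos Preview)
Your approach is genuinely different from the paper's. The paper does not pass through a ratio of two enumeration formulas at all: it applies parts~(a) and~(b) of McKay's Theorem~2.10 in~\cite{McKay81}, which bound the conditional probability $\probcond{L\subseteq \simple(\ds)}{H\subseteq \simple(\ds)}$ \emph{directly}, sandwiching it between $\frac{\prod[d_i-h_i]_{j_i}}{2^{|E(J)|}[m-|E(H)|-D]_{|E(J)|}}$ and a similar expression with explicit multiplicative corrections of the form $\Delta(\Delta_L+1)/(m-\ldots)$ and $\Delta^2/(m-\ldots)$. Under $\Delta\le n^{0.25}$, $\Delta_L\le k\log n+k$, these corrections are $O(n^{-1/2})$, and the proof is done. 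Note in particular that what you cite as ``\cite[Theorem~10]{McKay81}'' is this conditional-probability estimate, not the $\exp(-\eta/2-\eta^2/4)$ enumeration formula you then proceed to use; the two are different statements.

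Your route has a real gap at step~(b). Under the stated hypothesis $\max_i d_i\le n^{0.25}$, the residual error in the enumeration formula is $\exp\bigl(O((\max_i d_i)^4/n)\bigr)=\exp(O(1))$, not $1+o(1)$. Two unspecified quantities each of size $\exp(O(1))$ have ratio $\exp(O(1))$; you cannot conclude the ratio is $1+o(1)$ from the big-$O$ bound alone, and no ``elementary Taylor estimate'' applied to a black-box $O(\cdot)$ term will help. To salvage the argument you would have to open up the proof of the enumeration formula, write the error as an explicit function of the degree sequence, and verify that perturbing $O(1)$ coordinates of $\ds-\hs$ by $O(1)$ changes that function by $o(1)$. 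That is feasible but is substantially more than you have written, and it is exactly the bookkeeping that the paper's direct appeal to Theorem~2.10 avoids. A smaller omission: after bijecting $N(\ds,H)$ to simple graphs with sequence $\ds-\hs$ \emph{avoiding} $E(H)$, you then apply the enumeration formula as though it counted all simple graphs with that sequence; the avoidance constraint contributes an extra factor which must be shown to be $1+o(1)$ (it is, via a union bound over the $O(\log n)$ forbidden edges, each present with probability $O(n^{-1/2})$), but this step is missing.
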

Notice that to use this lemma one has to check the existence of a
simple graph $G$ with certain properties. In our case, Erd\H os-Gallai
Theorem will be enough to ensure such simple graph exists.
\begin{lem}
  \label{lem:exists_simplegraph}
  Let $n$ be sufficiently large so that $n - n^{0.25} - k\log n >
  \sqrt{n}$.  Let $n'\geq n - \log n$. Let $\gs$ be a sequence on
  $[n']$ such that $g_1 \geq g_2\geq \dotsm\geq g_{n'}$, $\sum_i
  g_i$ is even, $g_1 \leq n^{0.25}$, $\card{\set{j\st g_j =
      0}}\leq k\log n$. Then there exists a simple graph with degree
  sequence $\gs$.
\end{lem}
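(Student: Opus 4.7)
The plan is to apply the Erd\H{o}s--Gallai theorem: a nonnegative integer sequence $g_1\geq g_2\geq \dotsm\geq g_{n'}$ with even sum is the degree sequence of a simple graph on $[n']$ if and only if, for every $\kappa\in\{1,\dots,n'\}$,
\[
  \sum_{i=1}^{\kappa} g_i \;\leq\; \kappa(\kappa-1) + \sum_{i=\kappa+1}^{n'} \min(g_i,\kappa).
\]
I would verify this inequality directly. The three structural hypotheses on $\gs$ will be used as follows: the cap $g_1\leq n^{0.25}$ controls the left-hand side, while the bound on the number of zero entries controls the right-hand side. Having only $\log n$ room between $n'$ and $n$ plays no essential role beyond allowing us to treat $n'$ as being within a logarithmic factor of $n$.

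For the left-hand side, since every $g_i\leq g_1\leq n^{0.25}$, we have $\sum_{i=1}^\kappa g_i\leq \kappa n^{0.25}$. For the right-hand side, monotonicity of $\gs$ together with $|\{j:g_j=0\}|\leq k\log n$ gives $g_i\geq 1$ for every $i\leq n'-k\log n$; hence among the indices $\{\kappa+1,\dots,n'\}$ at least $\max\{0,\,n'-\kappa-k\log n\}$ of them satisfy $g_i\geq 1$, and whenever $\kappa\geq 1$ each such index contributes $\min(g_i,\kappa)\geq 1$ to the sum. It therefore suffices to check
\[
  \kappa n^{0.25} \;\leq\; \kappa(\kappa-1) + \max\{0,\,n'-\kappa-k\log n\}
\qquad\text{for every } \kappa\in\{1,\dots,n'\}.
\]

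I would split on whether $\kappa$ exceeds $n^{0.25}+1$. If $\kappa\geq n^{0.25}+1$, then $\kappa(\kappa-1)\geq \kappa\cdot n^{0.25}$ and the inequality is immediate, irrespective of the max term. If $\kappa\leq n^{0.25}+1$, the inequality rearranges to $\kappa(n^{0.25}-\kappa+2)+k\log n\leq n'$; the quadratic on the left is maximized near $\kappa=(n^{0.25}+2)/2$ with value at most $(n^{0.25}+2)^2/4\leq \sqrt{n}/4+n^{0.25}+1$, so it suffices to see that $\sqrt{n}/4+n^{0.25}+1+k\log n\leq n'$. Combining $n'\geq n-\log n$ with the standing hypothesis $n-n^{0.25}-k\log n>\sqrt{n}$ gives this for all $n$ large enough.

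The main observation is that the hypothesis $n-n^{0.25}-k\log n>\sqrt{n}$ in the lemma is calibrated precisely to dominate the worst-case quadratic bound at $\kappa\approx n^{0.25}/2$; beyond this case analysis there is no substantive obstacle, and the Erd\H{o}s--Gallai condition delivers the desired simple graph on $[n']$ with degree sequence $\gs$.
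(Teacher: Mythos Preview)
Your proof is correct and follows essentially the same approach as the paper: apply Erd\H{o}s--Gallai and split on whether $\kappa$ exceeds roughly $n^{0.25}$, bounding the left side by $\kappa n^{0.25}$ and the right side from below via the count of nonzero entries. The only cosmetic difference is that the paper, for $\kappa\leq n^{0.25}$, uses the cruder bound $\kappa n^{0.25}\leq \sqrt{n}\leq n-n^{0.25}-k\log n\leq n'-\kappa-\card{\set{j:g_j=0}}$ directly, rather than optimizing the quadratic as you do.
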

The proofs for these lemmas are presented in
Section~\ref{sec:proofsimple}. Now we can analyse the deletion
algorithm for simple graphs.  Let $uv \in \binom{V}{2}$. Then
\begin{equation*}
  \probsimple{uv\text{ is chosen in the first step}}
  =
  \probsimple{uv\in E(\simple(\ds))}\frac{1}{m}.
\end{equation*}
We need to compute $\probsimple{uv\in E(\simple(\ds))}$. Note that this is
the same as $\probcond{L\subseteq G(\ds)}{H\subseteq G(\ds)}$ with $L
= ([n],\set{uv})$ and $H = ([n], \varnothing)$. In order to use
Lemma~\ref{lem:mckay_main}, we need to check if there is a simple
graph $G$ with $G\cap L = H$ with degree sequence $\ds$. This is the
same as saying that there exists a simple graph $G$ with degree
sequence $\ds$ such that $uv\not\in E(G)$. It suffices to show that,
for every set of vertices $S\subseteq [n]\setminus\set{u,v}$ of size
$d_v$, there is a simple graph with degree sequence $\ds'$, where
$\ds'$ is obtained from $\ds$ be deleting $v$ and decreasing the
degree of every vertex in $S$ by $1$ (that is, $S$ can be the set of
neighbours of $v$ and it does not include $u$).  Note that $\sum_i
d_i'$ is even because $\sum_j d_j$ is even. Moreover, $n-1 \geq
n-\log n$ and $\max_i d_i'\leq n^{0.25}$ and $\ds'$ has no
zeroes. By Lemma~\ref{lem:exists_simplegraph}, there is a simple graph
with degree sequence $\ds'$ and so we can use
Lemma~\ref{lem:mckay_main} to show that
\begin{equation*}
  \prob{uv\in \simple} = \frac{d_ud_v}{2 m} (1+\xi_3(n)),
\end{equation*}
where $\xi_3(n) = o(1)$. Now suppose we are in the $i$-th iteration of
the loop and deleting a vertex $w$. Let $\check n$ be the number of
undeleted vertices in the beginning of iteration $i$ and let $\check
m$ be the number of undeleted edges at the beginning of iteration
$i$. Let $\check\ds$ be the current degree sequence (that is, $\check
d_u$ is the number neighbours $u$ has among the undeleted vertices).
At each iteration we delete at most $k$ edges and only one vertex. So
$\check n \geq n - t(n)$ and $\check m \geq m - kt(n)$. Let
$\ell:=\check d_w$ and $\set{u_1,\dotsc, u_\ell}$ be a set with $\ell$
(undeleted) vertices.  Let $U$ be the neighbours of $w$ discovered in
iteration $i$. We want to compute the probability that
$U=\set{u_1,\dotsc, u_\ell}$.  In order to use
Lemma~\ref{lem:mckay_main}, we have to check if there exists a simple
graph $G$ with degree sequence $\check\ds$ such that $G\cap L = H$,
where $H$ is the graph discovered so far (which includes the deleted
vertices) and $L = ([n],E(H)\cup\set{wu_1,\dotsc,wu_\ell})$, which is
the same as checking if it is possible to get a simple graph such that
$w$ as no neighbours in $\set{u_1,\dots, u_\ell}$. Let $U'$ be a
set of $\ell$ undeleted vertices such that $U'\cap \set{u_1,\dotsc,
  u_\ell} = \varnothing$. There are plenty of choices for $U'$ since
$t(n)\leq \log(n)$. Let $\ds'$ be the degree sequence on $\check n -1$
obtained from $\check\ds$ by deleting $w$ and decreasing the degree of
each vertex in $U'$ by $1$. Then $\check n \geq n -\log n$, $\max_i
d_i'\leq n^{0.25}$ and $\card{\set{j\st d_j' = 0}}\leq kt(n)\leq k\log
n$. Using Lemma~\ref{lem:exists_simplegraph}, there is a simple graph
with degree sequence $\ds'$ and so, by Lemma~\ref{lem:mckay_main},
\begin{equation*}
  \prob{U = \set{u_1,\dotsc, u_\ell}}
  =
  \frac{\ell!\prod_{i=1}^\ell \check d_{u_i}}{2^\ell[\check m]_\ell}(1+\xi_4(n)),
\end{equation*}
where $\xi_4(n) = o(1)$. Thus, there exists a function $\xi(n)=o(1)$ such
that
\begin{equation*}
  \prob{U = \set{u_1,\dotsc, u_\ell}} = \probmulti{U =
    \set{u_1,\dotsc, u_\ell}}(1+\xi(n)) 
\end{equation*}
and, for every $uv\in \binom{V}{2}$,
\begin{equation*}
  \probsimple{uv\text{ is chosen in the first step}}
  \sim
  \probmulti{uv\text{ is chosen in the first step}}.
\end{equation*}
 We conclude that the deletion algorithms can be
 coupled for $t(n)$ steps as long as $(1+\xi)^t=1+o(1)$. Thus, it
 suffices to choose $t = o(1/\xi)$.

\subsection{Proofs of Lemma~\ref{lem:mckay_main} and
  Lemma~\ref{lem:exists_simplegraph}}
\label{sec:proofsimple}
\begin{proof}[Proof of Lemma~\ref{lem:mckay_main}]
  Let $\Delta_L$ be the maximum degree in $L$ and let $\Delta$ be the
  maximum degree in $D$. Note that $\Delta_L \leq |E(H)|+k \leq k\log
  n + k$ and $\Delta \leq n^{0.25}$. Then
  \begin{equation*}
    \begin{split}
      \card{E(\simple(\ds))}-\card{E(H)}-\card{E(J)} 
      &\geq m - kt(n)-k \geq
      n^{0.25}(2n^{0.25})
      \\
      &\geq \Delta (\Delta+\Delta_L) =: D.
    \end{split}
  \end{equation*}
  So we can use part (a) of~\cite[Theorem 2.10]{McKay81} to obtain that
  \begin{equation*}
    \begin{split}
      \probcond{L\subseteq \simple(\ds)}{H\subseteq \simple(\ds)}
      &
      \leq
      \frac{\prod_{v=1}^{ n}[d_i - h_i]_{j_i}}
      {2^{\card{E(J)}}[ m - \card{E(H)}-D]_{\card{E(J)}}}
      \\&=
      \frac{\prod_{v=1}^{ n}[d_i - h_i]_{j_i}}
      {2^{\card{E(J)}}[ m]_{\card{E(J)}}}(1+\nu_1(n))
    \end{split}
  \end{equation*}
  with $\nu_1(n) = o(1)$ because $|E(J)|\leq k$ and $ m -
  \card{E(H)}-D \geq m - k\log n- 2\sqrt{n}$. Now we will use part (b)
  of Theorem 2.10 from~\cite{McKay81}. We have to check the conditions
  for (b):
  \begin{equation*}
    \begin{split}
      \card{E(\simple(\ds))}-\card{E(H)}-\card{E(J)} 
      &\geq 
       m -
      kt(n)-k \geq 
      k\geq n^{0.25}(n^{0.25}+1)
      \\
      &\geq 
       \Delta(\Delta+\Delta_L+2)
      +\Delta (\Delta_L+1),
    \end{split}
  \end{equation*}
  so we can apply~\cite{McKay81}[Theorem 2.10(b)]. We have to bound some
  errors given by~\cite{McKay81}[Theorem 2.10(b)]. We have that
  \begin{equation*}
    \begin{split}
      0&\leq \frac{\Delta (\Delta_L+1)}
    { m-\card{E(H)}-\card{E(J)} -  
      \Delta (\Delta+\Delta_L+2)}
    \\
    &\leq
    \frac{n^{0.25}(n^{0.25}+1)}
    {n - k\log n - n^{0.25}(2n^{0.25}+2)}
    =:
    \nu_2(n),
    \end{split}
  \end{equation*}
  with $\nu_2(n) = o(1)$, and
  \begin{equation*}
    \begin{split}
      0&\leq
      \frac{\Delta^2}
      {2(\card{E(G)}-\card{E(H)}-D-(1-1/e)\card{E(J)})}
      \\
      &\leq
      \frac{\sqrt{n}}
      {2(n -k\log n-  n^{0.25}(2n^{0.25}) -(1-1/e)k)}
      =: \nu_3(n),
    \end{split}
  \end{equation*}
  with $\nu_3(n) = o(1)$  Then~\cite{McKay81}[Theorem 2.10(b)] implies that
  \begin{equation*}
    \begin{split}
      \probcond{L\subseteq \simple(\ds)}{H\subseteq \simple(\ds)}
      &\geq
      \frac{\prod_{v=1}^{ n}[d_i - h_i]_{j_i}}
      {2^{\card{E(J)}}[ m]_{\card{E(J)}}}
      (1+\nu_4(n))\left(\frac{1+\nu_2(n)}{1+\nu_3(n)}\right)^{E(J)}.
    \end{split}
  \end{equation*}
  with $\nu_4(n) = o(1)$. Since $\nu_i(n)=o(1)$ for $i=1,2,3,4$, we
  can conclude that
  \begin{equation*}
    \probcond{L\subseteq \simple(\ds)}{H\subseteq \simple(\ds)}
      =
      \frac{\prod_{v=1}^{ n}[d_i - h_i]_{j_i}}
      {2^{\card{E(J)}}[ m]_{\card{E(J)}}}
      (1+\nu(n)),
  \end{equation*}
where $\nu = o(1)$.
\end{proof}

\begin{proof}[Proof of Lemma~\ref{lem:exists_simplegraph}]
  We will use Erd\H os-Gallai Theorem: $\gs$ is the degree sequence of
  a simple graph iff, for every $1\leq \ell \leq n'$,
  \begin{equation*}
    \sum_{i=1}^{\ell} g_i
    \leq
    \ell(\ell-1)
    +
    \sum_{j=\ell+1}^{n'}\min\set{\ell, g_j}.
  \end{equation*}
  If $\ell \geq n^{0.25}+1$, then $\sum_{i=1}^{\ell} g_i \leq \ell
  g_1 \leq \ell(\ell-1)$.  If $\ell \leq n^{0.25}$,
  \begin{equation*}
    \begin{split}
      \sum_{i=1}^{\ell} g_i
      &\leq
      \ell n^{0.25}
      \leq 
      \sqrt{n}
      \leq
      n - n^{0.25} - k\log n
      \leq
      n' - \ell - \card{\set{j\st g_j = 0}} 
      \\
      &=
      \sum_{j= \ell+1}^{n'} 1 - \card{\set{j\st g_j = 0}} 
      \leq
      \sum_{j= \ell+1}^{n'}\min\set{\ell, g_j}.
    \end{split}
  \end{equation*}
\end{proof}

\section*{Acknowledgments}
I would like to thank my advisor Nicholas Wormald for his supervision
during this project.

\bibliographystyle{plain}
\bibliography{kcore}

\begin{thebibliography}{10}

\bibitem{Bollobas84}
B.~Bollob{\'a}s.
\newblock The evolution of sparse graphs.
\newblock In {\em Graph theory and combinatorics ({C}ambridge, 1983)}, pages
  35--57. Academic Press, London, 1984.

\bibitem{CainWormald06}
J.~Cain and N.~Wormald.
\newblock Encores on cores.
\newblock {\em Electron. J. Combin.}, 13(1):Research Paper 81, 13 pp.
  (electronic), 2006.

\bibitem{Cooper04}
C.~Cooper.
\newblock The cores of random hypergraphs with a given degree sequence.
\newblock {\em Random Structures Algorithms}, 25(4):353--375, 2004.

\bibitem{Feller1}
W.~Feller.
\newblock {\em An introduction to probability theory and its applications.
  {V}ol. {I}}.
\newblock Third edition. John Wiley \& Sons Inc., New York, 1968.

\bibitem{FernholzRamachandaram04}
D.~Fernholz and V.~Ramachandaram.
\newblock The giant $k$-core of a random graph with specified degree sequence.
\newblock {M}anuscript available at \url{http://www.cs.utexas.edu/~fernholz/},
  November 2003.

\bibitem{JansonLuczak07}
S.~Janson and M.~J. Luczak.
\newblock A simple solution to the {$k$}-core problem.
\newblock {\em Random Structures Algorithms}, 30(1-2):50--62, 2007.

\bibitem{JansonLuczak08}
S.~Janson and M.~J. Luczak.
\newblock Asymptotic normality of the {$k$}-core in random graphs.
\newblock {\em Ann. Appl. Probab.}, 18(3):1085--1137, 2008.

\bibitem{Kim06}
J.~H. Kim.
\newblock Poisson cloning model for random graphs.
\newblock In {\em International {C}ongress of {M}athematicians. {V}ol. {III}},
  pages 873--897. Eur. Math. Soc., Z\"urich, 2006.

\bibitem{Luczak91}
T.~{\L}uczak.
\newblock Size and connectivity of the {$k$}-core of a random graph.
\newblock {\em Discrete Math.}, 91(1):61--68, 1991.

\bibitem{Luczak92}
T.~{\L}uczak.
\newblock Sparse random graphs with a given degree sequence.
\newblock In {\em Random graphs, {V}ol.\ 2 ({P}ozna\'n, 1989)}, Wiley-Intersci.
  Publ., pages 165--182. Wiley, New York, 1992.

\bibitem{McDiarmid89}
C.~McDiarmid.
\newblock On the method of bounded differences.
\newblock In {\em Surveys in combinatorics, 1989 ({N}orwich, 1989)}, volume 141
  of {\em London Math. Soc. Lecture Note Ser.}, pages 148--188. Cambridge Univ.
  Press, Cambridge, 1989.

\bibitem{McKay81}
B.~D. McKay.
\newblock Subgraphs of random graphs with specified degrees.
\newblock In {\em Proceedings of the {T}welfth {S}outheastern {C}onference on
  {C}ombinatorics, {G}raph {T}heory and {C}omputing, {V}ol. {II} ({B}aton
  {R}ouge, {L}a., 1981)}, volume~33, pages 213--223, 1981.

\bibitem{McKay85}
B.~D. McKay.
\newblock Asymptotics for symmetric {$0$}-{$1$} matrices with prescribed row
  sums.
\newblock {\em Ars Combin.}, 19(A):15--25, 1985.

\bibitem{McKayWormald91}
B.~D. McKay and N.~C. Wormald.
\newblock Asymptotic enumeration by degree sequence of graphs with degrees
  {$o(n^{1/2})$}.
\newblock {\em Combinatorica}, 11(4):369--382, 1991.

\bibitem{Molloy92}
M.~Molloy.
\newblock M. {M}ath {T}hesis, University of Waterloo, 1992.

\bibitem{PittelSpencerWormald96}
B.~Pittel, J.~Spencer, and N.~Wormald.
\newblock Sudden emergence of a giant {$k$}-core in a random graph.
\newblock {\em J. Combin. Theory Ser. B}, 67(1):111--151, 1996.

\bibitem{PittelWormald03}
B.~Pittel and N.~C. Wormald.
\newblock Asymptotic enumeration of sparse graphs with a minimum degree
  constraint.
\newblock {\em J. Combin. Theory Ser. A}, 101(2):249--263, 2003.

\bibitem{Riordan08}
O.~Riordan.
\newblock The {$k$}-core and branching processes.
\newblock {\em Combin. Probab. Comput.}, 17(1):111--136, 2008.

\bibitem{Wormald99}
N.C. Wormald.
\newblock The differential equation method for randweoom graph processes and
  greedy algorithms.
\newblock In M.~Karonski and H.J. Proemel, editors, {\em Lectures on
  Approximation and Randomized Algorithms}, pages 73--155. PWN, Warsaw, 1999.

\end{thebibliography}
 		
\end{document}